\newtheorem{theorem}{Theorem}[section]
\newtheorem{definition}[theorem]{Definition}
\newtheorem{lemma}[theorem]{Lemma}
\newtheorem{proposition}[theorem]{Proposition}
\newtheorem{remark}[theorem]{Remark}
\newtheorem{example}[theorem]{Example}
\title{Algebras and varieties where Sasaki operations form an adjoint pair}
\author{Ivan~Chajda$^1$ $\cdot$ Helmut~L\"anger$^{1,2,0}$}
\date{}
\begin{document}
	
\footnotetext{Support of the research of the first author by the Czech Science Foundation (GA\v CR), project 24-14386L, entitled ``Representation of algebraic semantics for substructural logics'', and by IGA, project P\v rF~2024~011, is gratefully acknowledged.}

\footnotetext{Corresponding author \\
Helmut L\"anger \\
helmut.laenger@tuwien.ac.at}
	
\maketitle
	
\begin{abstract}
The so-called Sasaki projection was introduced by U.~Sasaki on the lattice $\mathbf L(\mathbf H)$ of closed linear subspaces of a Hilbert space $\mathbf H$ as a projection of $\mathbf L(\mathbf H)$ onto a certain sublattice of $\mathbf L(\mathbf H)$. Since $\mathbf L(\mathbf H)$ is an orthomodular lattice, the Sasaki projection and its dual can serve as the logical connectives conjunction and implication within the logic of quantum mechanics. It was shown by the authors in their previous paper \cite{CL17} that these operations form a so-called adjoint pair. The natural question arises if this result can be extended also to lattices with a unary operation which need not be orthomodular or to other algebras with two binary and one unary operation. To show that this is possible is the aim of the present paper. We determine a variety of lattices with a unary operation where the Sasaki operations form an adjoint pair and we continue with so-called $\lambda$-lattices and certain classes of semirings. We show that the Sasaki operations have a deeper sense than originally assumed by their author and can be applied also outside the lattices of closed linear subspaces of a Hilbert space.
\end{abstract}
	
{\bf AMS Subject Classification:} 06C15, 06B05, 06C05, 06C20, 06E20, 06B75, 16Y60, 16Y99
	
{\bf Keywords:} Sasaki operation, adjoint pair, modular lattice, complemented lattice, orthomodular lattice, $\lambda$-lattice, ordered semiring, orthomodular pseudoring, Boolean ring

\section{Preliminaries}

Consider a bounded complemented lattice $\mathbf L=(L,\vee,\wedge,{}',0,1)$ where the unary operation $'$ is a {\em complementation}, i.e.\ $\mathbf L$ satisfies the identities $x\vee x'\approx1$ and $x\wedge x'\approx0$. $\mathbf L$ is called {\em orthomodular} (see \cite{Be}) if the complementation $'$ is an {\em antitone involution} and $\mathbf L$ satisfies the {\em orthomodular law}, i.e.\ the identity
\begin{enumerate}
	\item[(OM)] $x\vee\big((x\vee y)\wedge x'\big)\approx x\vee y$.
\end{enumerate}
Apparently, the class of orthomodular lattices forms a variety.

A {\em projection} is a mapping $f$ from a set $M$ to $M$ satisfying $f\circ f=f$. In such a case $f$ is called a projection from $M$ onto $f(M)$. Let $\mathbf P=(P,\le)$ be a poset, $'$ an antitone involution on $\mathbf P$ and $f\colon P\to P$. Then the {\em dual} of $f$ is the mapping $\overline f\colon P\to P$ defined by $\overline f(x):=\big(f(x')\big)'$ for all $x\in P$. If $f$ is a projection or a monotone mapping then $\overline f$ has the same property, respectively. Now let $(L,\vee,\wedge,{}',0,1)$ be an orthomodular lattice and $a\in L$. The following mapping $p_a\colon L\to L$ was introduced by U.~Sasaki \cite{Sa}, see also \cite{Be}:
\[
p_a(x):=(x\vee a')\wedge a
\]
for all $x\in L$. This mapping is a monotone projection from $L$ onto $[0,a]$ and is usually called the {\em Sasaki projection} from $L$ onto $[0,a]$. The dual $\overline{p_a}$ of $p_a$ is defined by
\[
\overline{p_a}(x):=\big(p_a(x')\big)'=\big((x'\vee a')\wedge a\big)'=(x\wedge a)\vee a'
\]
for all $x\in L$ and it is a monotone projection from $L$ onto $[a',1]$. For more information on Sasaki projections cf.\ \cite{GGN}. In what follows we will call binary operations defined in a similar way {\em Sasaki operations}.

Let $(P,\le)$ be a poset and $f,g$ binary operations on $P$. We introduce the following statements:
\begin{enumerate}
	\item[(A1)] If $f(x,y)\le z$ then $x\le g(y,z)$,
	\item[(A2)] if $x\le g(y,z)$ then $f(x,y)\le z$
\end{enumerate}
for all $x,y,z\in P$. Recall that $f$ and $g$ are said to form an {\em adjoint pair} if they satisfy both conditions (A1) and (A2). In such a case we say that $f$ and $g$ are connected via {\em adjointness}. If $f$ and $g$ form an adjoint pair then everyone of the two operations $f$ and $g$ determines the other one. Namely, for every $x,y\in P$, $f(x,y)$ is the smallest element $z$ of $P$ satisfying the inequality $x\le g(y,z)$, and for every $y,z\in P$, $g(y,z)$ is the greatest element $x$ of $P$ satisfying the inequality $f(x,y)\le z$.

It is easy to prove that if the binary operations $f$ and $g$ form an adjoint pair on a given poset $(P,\le)$ then $f$ is monotone in the first variable and $g$ in the second one.

\begin{lemma}\label{lem1}
Let $(P,\le)$ be a poset, $a,b,c\in P$ with $a\le b$ and $f,g$ binary operations on $P$ forming an adjoint pair. Then $f(a,c)\le f(b,c)$ and $g(c,a)\le g(c,b)$.
\end{lemma}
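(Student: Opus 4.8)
The plan is to prove both inequalities by the same three-move pattern: begin from reflexivity of $\le$, apply one of the two adjointness conditions to convert the trivial inequality into a statement involving the other operation, invoke the hypothesis $a\le b$ together with transitivity, and finally apply the remaining adjointness condition to obtain the desired conclusion. The two assertions are dual to one another in exactly the way that (A1) is dual to (A2), so a single careful argument suffices and the second follows by the same bookkeeping with the roles of $f$ and $g$ interchanged.

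For the first inequality I would argue as follows. Reflexivity gives $f(b,c)\le f(b,c)$. Applying (A1) with the substitution $x:=b$, $y:=c$, $z:=f(b,c)$ turns this into $b\le g\big(c,f(b,c)\big)$. Since $a\le b$ by hypothesis, transitivity yields $a\le g\big(c,f(b,c)\big)$. Now applying (A2) with $x:=a$, $y:=c$, $z:=f(b,c)$ converts this back and delivers $f(a,c)\le f(b,c)$, as required.

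For the second inequality the dual reasoning applies. Reflexivity gives $g(c,a)\le g(c,a)$, and (A2) with $x:=g(c,a)$, $y:=c$, $z:=a$ produces $f\big(g(c,a),c\big)\le a$. Using $a\le b$ and transitivity we obtain $f\big(g(c,a),c\big)\le b$, and then (A1) with $x:=g(c,a)$, $y:=c$, $z:=b$ yields $g(c,a)\le g(c,b)$.

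I do not expect any genuine obstacle here, since the statement merely records the monotonicity already announced in the preceding remark; the only point requiring care is the correct matching of the variables $x,y,z$ in (A1) and (A2) against the expressions $a,b,c,f(\cdot,\cdot),g(\cdot,\cdot)$, and the recognition that the two parts are formally dual so that no independent idea is needed for the second.
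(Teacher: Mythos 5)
Your proof is correct and is essentially identical to the paper's own argument: the paper proves each inequality by exactly the same chain (reflexivity, one adjointness condition, transitivity with $a\le b$, the other adjointness condition), and your explicit variable substitutions in (A1) and (A2) all check out.
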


\begin{proof}
Any of the following assertions implies the next one:
\begin{align*}
f(b,c) & \le f(b,c), \\
     b & \le g\big(c,f(b,c)\big), \\
     a & \le g\big(c,f(b,c)\big), \\
f(a,c) & \le f(b,c).
\end{align*}
Moreover, any of the following assertions implies the next one:
\begin{align*}
             g(c,a) & \le g(c,a), \\
f\big(g(c,a),c\big) & \le a, \\
f\big(g(c,a),c\big) & \le b, \\
             g(c,a) & \le g(c,b).
\end{align*}
\end{proof}

The classical example of an adjoint pair are the operations $\wedge$ and $\to$ on a Boolean algebra $(B,\vee,\wedge,{}',$ $0,1)$ where $x\to y:=x'\vee y$ for all $x,y\in B$ or, more general, the operations $\wedge$ and $\to$ on a relatively pseudocomplemented meet-semilattice $(S,\wedge,*)$ where $x\to y:=x*y$ for all $x,y\in S$ and $x*y$ denotes the relative pseudocomplement of $x$ with respect to $y$. Recall that for two elements $x$ and $y$ of a meet-semilattice $(S,\wedge)$ the {\em relative pseudocomplement} of $x$ with respect to $y$ is the greatest element $z$ of $S$ satisfying $x\wedge z\le y$. The {\em meet-semilattice} is called {\em relatively pseudocomplemented} if any two of its elements have a relative pseudocomplement, see \cite{Bi} for details.

It was shown by the authors in \cite{CL17} that if $\mathbf L=(L,\vee,\wedge,{}',0,1)$ is an orthomodular lattice then the Sasaki operations on $L$ defined by the afore mentioned projections, i.e.
\begin{enumerate}
\item[(S1)] $x\odot y:=(x\vee y')\wedge y\quad$ and $\quad x\to y:=x'\vee(x\wedge y)$
\end{enumerate}
for all $x,y\in L$, form an adjoint pair. Note that for the Sasaki operations defined by (S1) we have $x\odot y=p_y(x)$ and $x\to y=\overline{p_x}(y)$ for all $x,y\in L$. In case of (S1), conditions (A1) and (A2) read as follows:
\begin{enumerate}
	\item[(A1)] If $x\odot y\le z$ then $x\le y\to z$,
	\item[(A2)] if $x\le y\to z$ then $x\odot y\le z$
\end{enumerate}
for all $x,y,z\in L$. However, such conditions hold also in the case when $\mathbf L$ is not an orthomodular lattice. Namely, in order to prove adjointness we only used (OM), but not the fact that $'$ is an antitone involution. In fact, in a modular lattice with complementation, the choice of $'$ even determines whether $\mathbf L$ is orthomodular or not. For example, consider the complemented modular lattice $\mathbf L=(L,\vee,\wedge,{}',0,1)$ depicted in Fig.~1:

\vspace*{-3mm}

\begin{center}
	\setlength{\unitlength}{7mm}
	\begin{picture}(14,8)
		\put(4,1){\circle*{.3}}
		\put(1,3){\circle*{.3}}
		\put(3,3){\circle*{.3}}
		\put(5,3){\circle*{.3}}
		\put(7,3){\circle*{.3}}
		\put(10,3){\circle*{.3}}
		\put(4,5){\circle*{.3}}
		\put(7,5){\circle*{.3}}
		\put(9,5){\circle*{.3}}
		\put(11,5){\circle*{.3}}
		\put(13,5){\circle*{.3}}
		\put(10,7){\circle*{.3}}
		\put(4,1){\line(-3,2)3}
		\put(4,1){\line(-1,2)1}
		\put(4,1){\line(1,2)1}
		\put(4,1){\line(3,2)3}
		\put(10,3){\line(-3,2)3}
		\put(10,3){\line(-1,2)1}
		\put(10,3){\line(1,2)1}
		\put(10,3){\line(3,2)3}
		\put(4,5){\line(-3,-2)3}
		\put(4,5){\line(-1,-2)1}
		\put(4,5){\line(1,-2)1}
		\put(4,5){\line(3,-2)3}
		\put(10,7){\line(-3,-2)3}
		\put(10,7){\line(-1,-2)1}
		\put(10,7){\line(1,-2)1}
		\put(10,7){\line(3,-2)3}
		\put(4,1){\line(3,1)6}
		\put(1,3){\line(3,1)6}
		\put(3,3){\line(3,1)6}
		\put(5,3){\line(3,1)6}
		\put(7,3){\line(3,1)6}
		\put(4,5){\line(3,1)6}
		\put(3.85,.3){$0$}
		\put(.4,2.85){$a$}
		\put(2.4,2.85){$b$}
		\put(5.3,2.85){$c$}
		\put(7.3,2.85){$d$}
		\put(10.3,2.85){$e$}
		\put(3.4,4.85){$f$}
		\put(6.4,4.85){$g$}
		\put(8.4,4.85){$h$}
		\put(11.3,4.85){$i$}
		\put(13.3,4.85){$j$}
		\put(9.85,7.4){$1$}
		\put(6.2,-.75){{\rm Fig.~1}}
		\put(3,-1.75){Complemented modular lattice}
	\end{picture}
\end{center}

\vspace*{10mm}

If we choose $'$ as follows:
\[
\begin{array}{l|cccccccccccc}
	x  & 0 & a & b & c & d & e & f & g & h & i & j & 1 \\
	\hline
	x' & 1 & h & i & j & g & f & e & b & c & d & a & 0
\end{array}
\]
then $\mathbf L$ is not an orthomodular lattice since $'$ is not an involution. However, also in this case one can introduce $\odot$ and $\to$ by Sasaki operations on $L$ in such a way that these operations form an adjoint pair (cf.\ Proposition~\ref{prop7}(iii)).

Hence the natural question arises when two binary operations $\odot$ and $\to$ on a set form an adjoint pair. In general, we need not consider even a complemented lattice, we ask only an algebra with two binary operations and one unary operation, for example a semiring $(S,+,\cdot,0,{}')$ with an additional unary operation $'$. We need not assume $\cdot$ to be distributive with respect to $+$, i.e.
\[
(x+y)z\approx xz+yz\text{ or }z(x+y)\approx zx+zy,
\]
but we need that a partial order relation $\le$ is defined on our algebra. An example of such an algebra may e.g.\ be a so-called $\lambda$-lattice, see \cite{CL11}. However, if the Sasaki operations on a bounded lattice with a unary operation $'$ form an adjoint pair then $'$ must be a complementation, see the following result.

\begin{lemma}
Let $\mathbf L=(L,\vee,\wedge,{}')$ be a lattice with a unary operation $'$ and $\odot$ and $\to$ denote the Sasaki operations on $L$ defined by {\rm(S1)}. Then the following holds:
\begin{enumerate}[{\rm(i)}]
\item If $\mathbf L$ has a top element $1$ and $\odot$ and $\to$ satisfy condition {\rm(A1)} then $\mathbf L$ satisfies the identity $x\vee x'\approx1$,
\item if $\mathbf L$ has a bottom element $0$ and $\odot$ and $\to$ satisfy condition {\rm(A2)} then $\mathbf L$ satisfies the identity $x\wedge x'\approx0$,
\item if $\mathbf L$ is bounded and $\odot$ and $\to$ form an adjoint pair then $'$ is a complementation on $\mathbf L$.
\end{enumerate}
\end{lemma}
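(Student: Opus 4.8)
The plan is to exploit the bounds of the lattice to make one side of each adjointness condition hold trivially, thereby forcing the desired identity out of the other side. Claims (i) and (ii) are mutually dual in flavour, and (iii) will follow at once by combining them. The key preliminary step is to evaluate the Sasaki operations at the bounds: from (S1) one reads off directly $x\to1 = x'\vee(x\wedge1) = x\vee x'$, $x\to0 = x'\vee(x\wedge0) = x'$, and $x'\odot x = (x'\vee x')\wedge x = x\wedge x'$. These three identities are the whole engine of the proof.

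For (i), I would observe that since $1$ is the top element, the inequality $w\odot x\le1$ holds for every $w\in L$. Feeding this into (A1) (in the roles $y:=x$, $z:=1$) yields $w\le x\to1$ for all $w$; taking $w:=1$ gives $1\le x\to1 = x\vee x'$, hence $x\vee x'\approx1$. For (ii), dually, the premise of (A2) is the one made trivial: taking first argument $x'$ and parameters $y:=x$, $z:=0$, the hypothesis $x'\le x\to0$ reduces to $x'\le x'$, which always holds. Condition (A2) then delivers $x'\odot x\le0$, i.e.\ $x\wedge x'\le0$, and since $0$ is the bottom element we conclude $x\wedge x'\approx0$.

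Finally, (iii) is immediate: if $\mathbf L$ is bounded and $\odot,\to$ form an adjoint pair then both (A1) and (A2) are available, so (i) and (ii) apply simultaneously and give $x\vee x'\approx1$ together with $x\wedge x'\approx0$, which is exactly the assertion that $'$ is a complementation. I expect no genuine obstacle here: the argument rests entirely on spotting the three boundary evaluations above and the correct substitutions into (A1) and (A2); once these are in place the verification is purely routine. It is worth stressing in the writeup that no assumption that $'$ is an involution or antitone is used, which is precisely the point emphasised in the surrounding discussion.
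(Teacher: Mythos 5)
Your proposal is correct and follows essentially the same route as the paper: in both cases the bounds are used to make one side of the adjointness condition trivially true, and the desired identity is then read off from the other side. The only cosmetic difference is in (ii), where you substitute $x'$ as the first argument of (A2) (after computing $x\to0=x'$), while the paper substitutes $0$ (so that the premise $0\le a\to0$ is trivial without any evaluation); both substitutions lead to the same computation $(x'\vee x')\wedge x=(0\vee x')\wedge x=x'\wedge x\le0$.
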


\begin{proof}
Let $a\in L$.
\begin{enumerate}[(i)]
\item Because of $1\odot a\le1$ we have $1\le a\to1=a'\vee(a\wedge1)=a'\vee a$ and hence $a\vee a'=1$.
\item Because of $0\le a\to0$ we have $a'\wedge a=(0\vee a')\wedge a=0\odot a\le0$ and hence $a\wedge a'=0$.
\item This follows from (i) and (ii).
\end{enumerate}
\end{proof}

\section{Lattices}

In this section we investigate the Sasaki operations on lattices with a unary operation $'$. We are going to present some classes of lattices, in fact varieties, where the Sasaki operations form an adjoint pair.

For lattices $(L,\vee,\wedge,{}')$ with a unary operation $'$ we define the following identities:
\begin{enumerate}
\item[(B1)] $y'\vee\big((x\vee y')\wedge y\big)\approx x\vee y'$,
\item[(B2)] $\big(x'\vee(x\wedge y)\big)\wedge x\approx x\wedge y$.
\end{enumerate}
We study the Sasaki operations in the variety of lattices satisfying identities (B1) and (B2). Observe that if $'$ an antitone involution then anyone of the identities (B1) and (B2) implies the other one.

\begin{proposition}
Identities {\rm(B1)} and {\rm(B2)} are independent.
\end{proposition}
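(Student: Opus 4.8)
The plan is to prove independence by producing two separating algebras: a lattice with unary operation $'$ that satisfies (B1) but violates (B2), and a second one that satisfies (B2) but violates (B1). The guiding intuition is the remark recorded just before the proposition, namely that an antitone involution forces (B1) and (B2) to be equivalent; hence any separating example must rest on a unary operation $'$ that is \emph{not} an involution. I would therefore search among the simplest lattices, the finite chains, where $\vee=\max$, $\wedge=\min$, the Sasaki operations reduce to $x\odot y=\min(\max(x,y'),y)$ and $x\to y=\max(x',\min(x,y))$, and where $'$ may be prescribed freely.

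First I would record an elementary characterisation on a finite chain, obtained by a routine case analysis on the relative positions of the elements involved: (B1) holds if and only if $y'=1$ for every $y\neq1$, while (B2) holds if and only if $x'=0$ for every $x\neq0$. These two conditions are visibly incompatible on any nontrivial chain and each is trivial to arrange in isolation, which already renders independence transparent. Concretely I would use the two-element chain $\{0,1\}$ with two different unary operations. Setting $0':=1$ and $1':=1$ gives an algebra satisfying (B1) (since the only element below $1$ is $0$, and $0'=1$); here (B2) fails, witnessed by $x=1$, $y=0$, where its left-hand side is $\big(1'\vee(1\wedge0)\big)\wedge1=1$ while $1\wedge0=0$. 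Dually, setting $0':=0$ and $1':=0$ gives an algebra satisfying (B2), whereas (B1) fails, witnessed again by $x=1$, $y=0$, where its left-hand side is $0'\vee\big((1\vee0')\wedge0\big)=0$ while $1\vee0'=1$. Because both structures live on a two-element lattice, the remaining verification is a finite check.

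The proof presents no genuine computational obstacle; the only real content is conceptual, so the hard part is getting the right viewpoint rather than the arithmetic. The decisive observation is the decoupling phenomenon: (B1) is precisely the orthomodular-type law governing the lower Sasaki projection $p_y(x)=(x\vee y')\wedge y$, whereas (B2) is the corresponding law for the upper projection $\overline{p_x}(y)=x'\vee(x\wedge y)$, and these two constraints stop reinforcing one another exactly when $'$ is not an involution. Recognising that the separation must come from a non-involutive $'$, and then reading off the ``opposite'' chain conditions $y'=1$ versus $x'=0$, is the entire idea; once this is in hand, the two-element chain supplies the smallest possible witnesses and completes the argument.
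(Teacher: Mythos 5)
Your proof is correct and takes essentially the same route as the paper's own: both arguments use the two constant unary operations, $x':=1$ everywhere (which satisfies (B1) but violates (B2) at $(x,y)=(1,0)$) and $x':=0$ everywhere (which satisfies (B2) but violates (B1) at the same pair), yours merely specialized to the two-element chain while the paper states it for an arbitrary non-trivial bounded lattice. One side remark of yours is inaccurate but unused: on the two-element chain the conditions ``$y'=1$ for all $y\ne1$'' and ``$x'=0$ for all $x\ne0$'' are \emph{not} incompatible (the standard complementation $0'=1$, $1'=0$ satisfies both, and indeed both (B1) and (B2) hold there), so incompatibility only holds on chains with at least three elements; your explicit witnesses, however, do not rely on this claim.
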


\begin{proof}
Let $(L,\vee,\wedge,0,1)$ be a non-trivial bounded lattice. If we define a unary operation $'$ on $L$ by $x':=1$ for all $x\in L$ then $(L,\vee,\wedge,{}')$ satisfies identity {\rm(B1)} since
\[
y'\vee\big((x\vee y')\wedge y\big)\approx1\vee\big((x\vee 1)\wedge y\big)\approx1\approx x\vee 1\approx x\vee y',
\]
but does not satisfy identity {\rm(B2)} since
\[
\big(1'\vee(1\wedge0)\big)\wedge1=1\vee0=1\ne0=1\wedge0.
\]
If we define a unary operation $'$ on $L$ by $x':=0$ for all $x\in L$ then $(L,\vee,\wedge,{}')$ satisfies identity {\rm(B2)} since
\[
\big(x'\vee(x\wedge y)\big)\wedge x\approx\big(0\vee(x\wedge y)\big)\wedge x\approx(x\wedge y)\wedge x\approx x\wedge y,
\]
but does not satisfy identity {\rm(B1)} since
\[
0'\vee\big((1\vee0')\wedge0\big)=0\vee0=0\ne1=1\vee0=1\vee0'.
\]
\end{proof}

The following theorem shows when the Sasaki operations $\odot$ and $\to$ satisfy condition (A1) or condition (A2), respectively, depending on the afore mentioned identities.

\begin{theorem}\label{th4}
Let $\mathbf L=(L,\vee,\wedge,{}')$ be a lattice with a unary operation $'$ and $\odot$ and $\to$ denote the Sasaki operations on $L$ defined by {\rm(S1)}. Then the following holds:
\begin{enumerate}[{\rm(i)}]
\item If $\mathbf L$ satisfies identity {\rm(B1)} then $\odot$ and $\to$ satisfy condition {\rm(A1)},
\item if $\mathbf L$ satisfies identity {\rm(B2)} then $\odot$ and $\to$ satisfy condition {\rm(A2)}
\item If $\mathbf L$ satisfies identities {\rm(B1)} and {\rm(B2)} then $\odot$ and $\to$ form an adjoint pair.
\end{enumerate}
\end{theorem}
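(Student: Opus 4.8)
The plan is to prove the three parts in the order (i), (ii), (iii), with (iii) being immediate: since forming an adjoint pair means satisfying both (A1) and (A2), part (iii) follows at once by combining (i) and (ii). So the real work lies in (i) and (ii), and in each case the strategy is the same, namely to start from the hypothesised inequality, rewrite the relevant Sasaki term using the appropriate identity, and then close the gap using only the monotonicity of $\vee$ and $\wedge$ together with the basic meet/join inequalities.

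For part (i), I would assume $x\odot y\le z$, that is $(x\vee y')\wedge y\le z$, and aim at $x\le y\to z=y'\vee(y\wedge z)$. The key move is to apply (B1), which rewrites $x\vee y'$ exactly as $y'\vee\big((x\vee y')\wedge y\big)$. Since $x\le x\vee y'$, it suffices to bound this right-hand side above by $y\to z$. Now the inner term $(x\vee y')\wedge y$ lies below $y$ trivially and below $z$ by hypothesis, hence below $y\wedge z$; joining with $y'$ then gives $y'\vee\big((x\vee y')\wedge y\big)\le y'\vee(y\wedge z)=y\to z$, and the chain closes.

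For part (ii), I would assume $x\le y\to z=y'\vee(y\wedge z)$ and aim at $x\odot y=(x\vee y')\wedge y\le z$. Joining both sides of the hypothesis with $y'$ gives $x\vee y'\le y'\vee(y\wedge z)$ (the right-hand side already absorbs the extra $y'$), so meeting with $y$ yields $(x\vee y')\wedge y\le\big(y'\vee(y\wedge z)\big)\wedge y$. Here (B2), read with $x$ replaced by $y$ and $y$ by $z$, collapses the right-hand side to $\big(y'\vee(y\wedge z)\big)\wedge y=y\wedge z$, which is below $z$; this gives $x\odot y\le z$.

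The steps themselves are short, so I do not expect a genuine obstacle of the difficult-calculation kind. The one thing that requires care, and is really the point of the two identities, is recognising the correct substitution instance in each case: (B1) must be used with its variables matching $x\vee y'$ and $y$ as they occur in $x\odot y$, while (B2) must be instantiated at the pair $(y,z)$ rather than $(x,y)$ so that it is precisely the term $y\to z$ that gets simplified. Once these matches are made, no appeal to the involution property of $'$ or to modularity is needed, which is exactly what makes the result applicable beyond orthomodular lattices.
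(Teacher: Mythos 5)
Your proposal is correct and follows essentially the same route as the paper's proof: part (i) applies (B1) to rewrite $x\vee y'$ and then bounds the inner term $(x\vee y')\wedge y$ by $y\wedge z$ via monotonicity, and part (ii) joins the hypothesis with $y'$, meets with $y$, and collapses $\big(y'\vee(y\wedge z)\big)\wedge y$ to $y\wedge z$ using (B2) instantiated at $(y,z)$ --- exactly the substitution instances the paper uses. The only differences are cosmetic (the paper inserts an explicit $b\wedge(a\odot b)$ step where you argue the term lies below both $y$ and $z$), so there is nothing to change.
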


\begin{proof}
Let $a,b,c\in A$.
\begin{enumerate}[(i)]
\item If $a\odot b\le c$ then using identity (B1) we obtain
\[
a\le a\vee b'=b'\vee\big((a\vee b')\wedge b\big)=b'\vee\Big(b\wedge\big((a\vee b')\wedge b\big)\Big)=b'\vee\big(b\wedge(a\odot b)\big)\le b'\vee(b\wedge c)=b\to c.
\]
\item If $a\le b\to c$ then using identity (B2) we obtain
\[
a\odot b=(a\vee b')\wedge b\le\big((b\to c)\vee b'\big)\wedge b=\Big(\big(b'\vee(b\wedge c)\big)\vee b'\Big)\wedge b=\big(b'\vee(b\wedge c)\big)\wedge b=b\wedge c\le c.
\]
\item This follows from (i) and (ii).
\end{enumerate}
\end{proof}

If the lattice with a unary operation is even modular, we can simplify our assumptions essentially, i.e.\ we need not assume identities (B1) and (B2) a priori, see the following result.

\begin{proposition}\label{prop7}
	Let $\mathbf L=(L,\vee,\wedge,{}')$ be a modular lattice with a unary operation $'$ and $\odot$ and $\to$ denote the Sasaki operations on $L$ defined by {\rm(S1)}. Then the following holds:
	\begin{enumerate}[{\rm(i)}]
		\item If $\mathbf L$ has a top element $1$ and satisfies the identity $x\vee x'\approx1$ then $\mathbf L$ satisfies identity {\rm(B1)} and hence $\odot$ and $\to$ satisfy condition {\rm(A1)},
		\item if $\mathbf L$ has a bottom element $0$ and satisfies the identity $x\wedge x'\approx0$ then $\mathbf L$ satisfies identity {\rm(B2)} and hence	$\odot$ and $\to$ satisfy condition {\rm(A2)},
		\item if $\mathbf L$ is complemented then $\odot$ and $\to$ form an adjoint pair.
	\end{enumerate}
\end{proposition}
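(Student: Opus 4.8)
The plan is to reduce everything to Theorem~\ref{th4}. For part (iii) it suffices to verify that a complemented modular lattice satisfies both (B1) and (B2), and these two facts are precisely the content of parts (i) and (ii). Thus the whole proposition rests on two short computations, each of which turns the relevant Sasaki identity into a triviality by a single application of the modular law together with one complementation identity.

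For (i) I would establish (B1), namely $y'\vee\big((x\vee y')\wedge y\big)\approx x\vee y'$. The inequality $\le$ holds in every lattice since $(x\vee y')\wedge y\le x\vee y'$. For the reverse inequality I would exploit that $y'\le x\vee y'$ and apply modularity in the form ``$a\le c$ implies $a\vee(b\wedge c)=(a\vee b)\wedge c$'' with $a=y'$, $b=y$ and $c=x\vee y'$; this rewrites the left-hand side as $(y'\vee y)\wedge(x\vee y')$. Invoking the hypothesis $y\vee y'\approx1$ collapses the first factor to $1$, leaving exactly $x\vee y'$. Condition (A1) then follows from Theorem~\ref{th4}(i).

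For (ii) I would establish (B2), namely $\big(x'\vee(x\wedge y)\big)\wedge x\approx x\wedge y$. Here the inequality $\ge$ is automatic, because $x\wedge y$ lies below both $x$ and $x'\vee(x\wedge y)$. For the inequality $\le$ I would use $x\wedge y\le x$ and apply modularity, this time as ``$a\le c$ implies $(a\vee b)\wedge c=a\vee(b\wedge c)$'' with $a=x\wedge y$, $b=x'$ and $c=x$; this rewrites the left-hand side as $(x\wedge y)\vee(x'\wedge x)$. The hypothesis $x\wedge x'\approx0$ kills the second joinand, leaving $x\wedge y$. Condition (A2) then follows from Theorem~\ref{th4}(ii).

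Finally (iii) is immediate: if $\mathbf L$ is complemented it is bounded and satisfies both $x\vee x'\approx1$ and $x\wedge x'\approx0$, so (i) and (ii) yield (B1) and (B2) simultaneously, whence Theorem~\ref{th4}(iii) gives that $\odot$ and $\to$ form an adjoint pair. I do not anticipate a genuine obstacle; the only point requiring care is choosing the correct instance of the modular law, that is, the right assignment of $a,b,c$ and the right orientation of the identity, so that the comparable element sits below the ``pivot'' $c$, which is exactly what makes modularity applicable.
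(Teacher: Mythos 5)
Your proof is correct and takes essentially the same approach as the paper: both (i) and (ii) are obtained by exactly the same single application of the modular law (with the same assignment of elements, $y'\le x\vee y'$ resp.\ $x\wedge y\le x$) followed by the complementation identity, and (iii) combines them via Theorem~\ref{th4}(iii). The only cosmetic difference is that you split each identity into two inequalities, while the paper writes a direct equational chain.
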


\begin{proof}
	\
	\begin{enumerate}[(i)]
	\item Assume $\mathbf L$ to have a top element $1$ and to satisfy the identity $x\vee x'\approx1$. Then
	\[
	y'\vee\big((x\vee y')\wedge y\big)\approx y'\vee\big(y\wedge(x\vee y')\big)\approx(y'\vee y)\wedge(x\vee y')\approx1\wedge(x\vee y')\approx x\vee y'
	\]
	and hence $\mathbf L$ satisfies identity (B1) and therefore $\odot$ and $\to$ satisfy condition (A1) according to Theorem~\ref{th4} (i).
	\item Assume $\mathbf L$ to have a bottom element $0$ and to satisfy the identity $x\wedge x'\approx0$. Then
	\[
	\big(x'\vee(x\wedge y)\big)\wedge x\approx\big((x\wedge y)\vee x'\big)\wedge x\approx(x\wedge y)\vee(x'\wedge x)\approx(x\wedge y)\vee0\approx x\wedge y
	\]
	and hence $\mathbf L$ satisfies identity (B2) and therefore $\odot$ and $\to$ satisfy condition (A2) according to Theorem~\ref{th4} (ii).
	\item This follows from (i) and (ii).
	\end{enumerate}
\end{proof}

Concerning Proposition~\ref{prop7} we make the following remark.

\begin{remark}
Recall that a {\em meet-semilattice} $(S,\wedge,0)$ with bottom element $0$ is called {\em pseudocomplemented} if for every $x\in S$ there exists a greatest element $x^*$ of $S$ satisfying $x\wedge x^*=0$. It is clear that every finite distributive lattice is pseudocomplemented. Hence we can apply Proposition~\ref{prop7} {\rm(ii)} to finite distributive lattices in order to see that the Sasaki operations defined by {\rm(S1)} satisfy condition {\rm(A2)}. Recall that a {\em join-semilattice} $(S,\vee,1)$ with top element $1$ is called {\em dually pseudocomplemented} if for every $x\in S$ there exists a smallest element $x^d$ of $S$ satisfying $x\vee x^d=1$. It is clear that every finite distributive lattice is dually pseudocomplemented. Hence we can apply Proposition~\ref{prop7} {\rm(i)} to finite distributive lattices in order to see that the Sasaki operations defined by {\rm(S1)} satisfy condition {\rm(A1)}.
\end{remark}

For the next result, recall the following concepts.

A {\em lattice} $(L,\vee,\wedge,{}')$ with a unary operation $'$ is called {\em weakly orthomodular} respectively {\em dually weakly orthomodular} (cf.\ \cite{CL18}) if it satisfies the identity
\begin{align*}
	x & \approx(x\wedge y)\vee\big(x\wedge(x\wedge y)'\big)\text{ or} \\
	x &\approx(x\vee y)\wedge\big(x\vee(x\vee y)'\big),
\end{align*}
respectively. Hence, weakly orthomodular as well as dually weakly orthomodular lattices form a variety. Let us note that the unary operation $'$ need neither be a complementation nor an antitone involution.

\begin{proposition}\label{prop8}
Let $\mathbf L=(L,\vee,\wedge,{}')$ be a lattice with a unary operation $'$ and $\odot$ and $\to$ denote the Sasaki operations on $L$ defined by {\rm(S1)}. Then the following holds:
\begin{enumerate}[{\rm(i)}]
\item If $\mathbf L$ is weakly orthomodular and $'$ is an involution then $\mathbf L$ satisfies identity {\rm(B1)} and hence $\odot$ and $\to$ satisfy condition {\rm(A1)},
\item if $\mathbf L$ is dually weakly orthomodular then $\mathbf L$ satisfies identity {\rm(B2)} and hence $\odot$ and $\to$ satisfy condition {\rm(A1)},
\item if $\mathbf L$ is orthomodular then $\odot$ and $\to$ form an adjoint pair.
\end{enumerate}
\end{proposition}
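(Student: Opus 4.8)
The plan is to reduce all three parts to Theorem~\ref{th4} by deriving the relevant Sasaki identity from the (dual) weak orthomodularity hypothesis, the point being that each of the defining identities for weakly and dually weakly orthomodular lattices becomes, after one well-chosen substitution, exactly (B1) respectively (B2). For (i) and (ii) I would first record the trivial halves: in (B1) the inequality $y'\vee\big((x\vee y')\wedge y\big)\le x\vee y'$ holds in every lattice because both $y'$ and $(x\vee y')\wedge y$ lie below $x\vee y'$; dually, in (B2) we always have $x\wedge y\le\big(x'\vee(x\wedge y)\big)\wedge x$. So in each case only one inequality carries content, and that is precisely what the (dual) weak orthomodular law will supply.

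For (ii) I would substitute $x\mapsto x\wedge y$ and $y\mapsto x$ into the dually weakly orthomodular identity $x\approx(x\vee y)\wedge\big(x\vee(x\vee y)'\big)$. Since $(x\wedge y)\vee x=x$, the first meet factor collapses to $x$ and $\big((x\wedge y)\vee x\big)'=x'$, so the identity reads $x\wedge y\approx x\wedge\big(x'\vee(x\wedge y)\big)$, which is (B2); note that no assumption on $'$ is used here, matching the hypothesis. For (i) I would substitute $x\mapsto x\vee y'$ and $y\mapsto y'$ into the weakly orthomodular identity $x\approx(x\wedge y)\vee\big(x\wedge(x\wedge y)'\big)$. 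Because $y'\le x\vee y'$ the term $(x\vee y')\wedge y'$ collapses to $y'$, and here the involution enters: $\big((x\vee y')\wedge y'\big)'=(y')'=y$, whence $(x\vee y')\wedge\big((x\vee y')\wedge y'\big)'=(x\vee y')\wedge y$ and the identity becomes $x\vee y'\approx y'\vee\big((x\vee y')\wedge y\big)$, i.e.\ (B1). Having established (B1) in (i) and (B2) in (ii), the conclusions about conditions (A1)/(A2) follow directly from Theorem~\ref{th4}(i)/(ii).

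For (iii) I would argue that an orthomodular lattice is simultaneously weakly and dually weakly orthomodular and that its complementation is an involution, so that both previous parts apply and (B1) and (B2) hold together; Theorem~\ref{th4}(iii) then yields the adjoint pair. Concretely, writing $z:=x\wedge y\le x$, the orthomodular law in the form $z\le x\Rightarrow x=z\vee(x\wedge z')$ gives the weakly orthomodular identity, and its order dual (valid since $'$ is an antitone involution) gives the dually weakly orthomodular one. The main obstacle is not any computation but spotting the two substitutions: the whole argument hinges on choosing $b=x$ in the dual law and $b=y'$ in the primal law so that one factor degenerates, and on recognizing that the primal degeneration produces a doubly-primed term $(y')'$, which is exactly why (i)---unlike (ii)---must assume that $'$ is an involution.
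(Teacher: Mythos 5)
Your proposal is correct and follows essentially the same route as the paper: the same substitutions ($u=x\vee y'$, $v=y'$ into the weakly orthomodular identity, using $y''=y$; and $u=x\wedge y$, $v=x$ into the dual identity) to obtain (B1) and (B2), followed by an appeal to Theorem~\ref{th4}. The only difference is that you spell out why orthomodularity yields both weak forms in (iii), a step the paper leaves implicit.
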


\begin{proof}
	\
	\begin{enumerate}[(i)]
		\item Assume $\mathbf L$ to be weakly orthomodular and $'$ to be an involution. Then
		\[
		y'\vee\big((x\vee y')\wedge y\big)\approx y'\vee\big((x\vee y')\wedge y''\big)\approx x\vee y'
		\]
		and hence $\mathbf L$ satisfies identity (B1) and therefore $\odot$ and $\to$ satisfy condition (A1) according to Theorem~\ref{th4} (i).
		\item Assume $\mathbf L$ to be dually weakly orthomodular. Then
\[
\big(x'\vee(x\wedge y)\big)\wedge x\approx x\wedge\big((x\wedge y)\vee x'\big)\approx x\wedge y
\]
and hence $\mathbf L$ satisfies identity (B2) and therefore $\odot$ and $\to$ satisfy condition (A2) according to Theorem~\ref{th4} (ii).
\item This follows from (i) and (ii).
\end{enumerate}
\end{proof}

However, a lattice satisfying identity (B1) need not be modular, see the following example.

\begin{example}
	Consider the non-modular lattice $\mathbf N_5=(N_5,\vee,\wedge)$ visualized in Fig.~2:
	
	\vspace*{-3mm}
	
	\begin{center}
		\setlength{\unitlength}{7mm}
		\begin{picture}(4,8)
			\put(2,1){\circle*{.3}}
			\put(1,4){\circle*{.3}}
			\put(3,3){\circle*{.3}}
			\put(3,5){\circle*{.3}}
			\put(2,7){\circle*{.3}}
			\put(2,1){\line(-1,3)1}
			\put(2,1){\line(1,2)1}
			\put(3,3){\line(0,1)2}
			\put(2,7){\line(-1,-3)1}
			\put(2,7){\line(1,-2)1}
			\put(1.85,.3){$0$}
			\put(3.4,2.85){$a$}
			\put(.35,3.85){$b$}
			\put(3.4,4.85){$c$}
			\put(1.85,7.4){$1$}
			\put(1.2,-.75){{\rm Fig.~2}}
			\put(-1.1,-1.75){Non-modular lattice $\mathbf N_5$}
		\end{picture}
	\end{center}
	
	\vspace*{8mm}
	
	where the complementation $'$ is defined as follows:
	\[
	\begin{array}{l|ccccc}
		x  & 0 & a & b  & c & 1 \\
		\hline
		x' & 1 & b & b' & b & 0
	\end{array}
	\]
	with $b'\in\{a,c\}$. Then $'$ is not an involution since $c''=b'=a\ne c$ in case $b'=a$ and $a''=b'=c\ne a$ in case $b'=c$. Abbreviate $(N_5,\vee,\wedge,{}')$ by $\mathbf N_5'$ and let $\odot$ and $\to$ denote the Sasaki operations on $N_5$ defined by {\rm(S1)}. In case $b'=c$ the algebra $\mathbf N_5'$ satisfies identity {\rm(B1)} and hence also condition {\rm(A1)}. In case $b'=a$ the algebra $\mathbf N_5'$ does not satisfy condition {\rm(A1)} since
	\[
	c\odot b=(c\vee b')\wedge b=(c\vee a)\wedge b=c\wedge b=0,
	\]
	but
	\[
	c\not\le a=a\vee0=b'\vee(b\wedge0)=b\to0.
	\]
	and hence $\mathbf N_5'$ does not satisfy identity {\rm(B1)}. In any case, $\mathbf N_5'$ does not satisfy condition {\rm(A2)} since
	\[
	a\le1=b\vee a=c'\vee(c\wedge a)=c\to a,
	\]
	but
	\[
	a\odot c=(a\vee c')\wedge c=(a\vee b)\wedge c=1\wedge c=c\not\le a
	\]
	and hence $\mathbf N_5'$ does not satisfy identity {\rm(B2)}.
\end{example}

\section{$\lambda$-lattices}

Other ordered algebras with two binary and one unary operation where the Sasaki operations can be studied are the so-called $\lambda$-lattices.

For every poset $(P,\le)$ and any $a,b\in P$ we define the upper cone $U(a,b)$ of $a$ and $b$ by
\[
U(a,b):=\{x\in A\mid a\le x\text{ and }b\le x\}
\]
 and lower cone $L(a,b)$ of $a$ and $b$ by
\[
L(a,b):=\{x\in A\mid x\le a\text{ and }x\le b\}.
\]
Let us recall the concept of a $\lambda$-lattice introduced by V.~Sn\'a\v sel \cite{Sn}, see also \cite{CL11}. A {\em $\lambda$-lattice} is an algebra $(A,\sqcup,\sqcap)$ of type $(2,2)$ satisfying the following identities:
\[
\begin{array}{ll}
	x\sqcup y\approx y\sqcup x,                                     & x\sqcap y\approx y\sqcap x, \\
	x\sqcup\big((x\sqcup y)\sqcup z\big)\approx(x\sqcup y)\sqcup z, & x\sqcap\big((x\sqcap y)\sqcap z\big)\approx(x\sqcap y)\sqcap z, \\
	x\sqcup(x\sqcap y)\approx x,                                    & x\sqcap(x\sqcup y)\approx x.
\end{array}
\]
It is evident that the class of $\lambda$-lattices forms a variety. It is immediate to check that it satisfies the idempotent laws
\[
x\sqcup x\approx x\text{ and }x\sqcap x\approx x.
\]
In a $\lambda$-lattice a partial order relation $\le$, the so-called {\em induced order}, can be introduced by
\[
x\le y\text{ if and only if }x\sqcup y=y\text{ if and only if }x\sqcap y=x
\]
($x,y\in A$), see \cite{CL11} for details. Every poset $(A,\le)$ having the property that any two elements have at least one lower bound and at least one upper bound can be converted into a $\lambda$-lattice by defining binary operations $\sqcup$ and $\sqcap$ as follows: \\
If $a\le b$ then $a\sqcup b=b\sqcup a:=b$ and $a\sqcap b=b\sqcap a:=a$. \\
If $a\parallel b$ then $a\sqcup b=b\sqcup a$ is an arbitrary element of $U(a,b)$, and $a\sqcap b=b\sqcap a$ is an arbitrary element of $L(a,b)$. It is elementary to verify the identities of a $\lambda$-lattice. Of course, every lattice is a $\lambda$-lattice, but not vice versa. Fig.~3 shows a $\lambda$-lattice that is not a lattice:

\vspace*{-5mm}

\begin{center}
	\setlength{\unitlength}{7mm}
	\begin{picture}(4,8)
		\put(2,1){\circle*{.3}}
		\put(1,3){\circle*{.3}}
		\put(3,3){\circle*{.3}}
		\put(1,5){\circle*{.3}}
		\put(3,5){\circle*{.3}}
		\put(2,7){\circle*{.3}}
		\put(1,3){\line(0,1)2}
		\put(1,3){\line(1,1)2}
		\put(1,3){\line(1,-2)1}
		\put(3,3){\line(-1,-2)1}
		\put(3,3){\line(-1,1)2}
		\put(3,3){\line(0,1)2}
		\put(2,7){\line(-1,-2)1}
		\put(2,7){\line(1,-2)1}
		\put(1.85,.3){$0$}
		\put(.35,2.85){$a$}
		\put(3.4,2.85){$b=c\sqcap d$}
		\put(.35,4.85){$c$}
		\put(3.4,4.85){$d=a\sqcup b$}
		\put(1.85,7.4){$1$}
		\put(1.2,-.75){{\rm Fig.~3}}
		\put(.6,-1.75){A $\lambda$-lattice}
	\end{picture}
\end{center}

\vspace*{10mm}

For $\lambda$-lattices $(A,\sqcup,\sqcap,{}')$ with a unary operation $'$ we introduce the following identities and conditions (which could be rewritten in the form of identities) being variants of the identities (B1) and (B2) from the previous section:
\begin{enumerate}
\item[(C1)] $y'\sqcup\big((x\sqcup y')\sqcap y\big)\approx x\sqcup y'$,
\item[(C2)] $\big(x'\sqcup(x\sqcap y)\big)\sqcap x\approx x\sqcap y$
\end{enumerate}
for all $x,y\in A$, or, in the form of inequalities,
\begin{enumerate}
\item[(D1)] $x\sqcup y'\le y'\sqcup\big((x\sqcup y')\sqcap y\big)$,
\item[(D2)] $\big(x'\sqcup(x\sqcap y)\big)\sqcap x\le x\sqcap y$
\end{enumerate}
for all $x,y\in A$. Obviously, identity (C1) implies condition (D1) and identity (C2) implies condition (D2). In a $\lambda$-lattices $(A,\sqcup,\sqcap,{}')$ with a unary operation, the Sasaki operations can be defined by
\begin{enumerate}
\item[(S2)] $x\odot y:=(x\sqcup y')\sqcap y\quad$ and $\quad x\to y:=x'\sqcup(x\sqcap y)$
\end{enumerate}
for all $x,y\in A$.

Similarly as in the case of lattices, we can prove the following result.

\begin{lemma}\label{lem2}
	Let $\mathbf A=(A,\sqcup,\sqcap,{}')$ be a $\lambda$-lattice with a unary operation $'$ and $\odot$ and $\to$ denote the Sasaki operations on $A$ defined by {\rm(S2)}. Then the following holds:
	\begin{enumerate}[{\rm(i)}]
		\item If $\mathbf A$ has a top element $1$ and $\odot$ and $\to$ satisfy condition {\rm(A1)} then $\mathbf A$ satisfies the identity $x\sqcup x'\approx1$,
		\item if $\mathbf A$ has a bottom element $0$ and $\odot$ and $\to$ satisfy condition {\rm(A2)} then $\mathbf A$ satisfies the identity $x\sqcap x'\approx0$,
		\item if $\mathbf A$ is bounded and $\odot$ and $\to$ form an adjoint pair then $'$ is a complementation on $\mathbf A$.
	\end{enumerate}
\end{lemma}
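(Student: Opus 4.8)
The plan is to mirror the proof of the corresponding lemma for ordinary lattices, substituting $\sqcup,\sqcap$ for $\vee,\wedge$ and checking that the two small simplifications needed still go through in the $\lambda$-lattice setting via the induced order. Recall from the definition that $x\le y$ is equivalent to $x\sqcup y=y$ and to $x\sqcap y=x$; this is the only extra ingredient beyond the lattice case.

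For part (i) I would fix $a\in A$ and begin from the trivially valid inequality $1\odot a\le1$, which holds because $1$ is the top element. Applying condition (A1) with the triple $(1,a,1)$ yields $1\le a\to1$. The key step is to evaluate $a\to1=a'\sqcup(a\sqcap1)$: since $a\le1$ in the induced order, we have $a\sqcap1=a$, so $a\to1=a'\sqcup a$. As $1$ is the top element, $1\le a'\sqcup a$ forces $a\sqcup a'=1$. Part (ii) is dual: starting from $0\le a\to0$ (valid since $0$ is the bottom element) and applying (A2) with $(0,a,0)$ gives $0\odot a\le0$. Here I would evaluate $0\odot a=(0\sqcup a')\sqcap a$, use $0\le a'$ so that $0\sqcup a'=a'$, and conclude $a'\sqcap a\le0$, whence $a\sqcap a'=0$. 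Part (iii) is then immediate: if $\mathbf A$ is bounded and $\odot,\to$ form an adjoint pair, both (A1) and (A2) hold, so (i) and (ii) together give $x\sqcup x'\approx1$ and $x\sqcap x'\approx0$, i.e.\ $'$ is a complementation.

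There is no genuine obstacle here, but the one point meriting care is that in a $\lambda$-lattice the operations $\sqcup,\sqcap$ need not return the join and meet when their arguments are incomparable, so the lattice computation does not transfer automatically. Fortunately, every simplification required above---namely $a\sqcap1=a$ and $0\sqcup a'=a'$---involves a pair of comparable elements, one of which is a bounding element $0$ or $1$. Each such identity therefore follows directly from the definition of the induced order rather than from any absorption or distributivity law, so the argument carries over verbatim.
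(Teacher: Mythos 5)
Your proposal is correct and follows essentially the same argument as the paper: instantiate (A1) at $(1,a,1)$ and (A2) at $(0,a,0)$, simplify $a\to1=a'\sqcup a$ and $0\odot a=a'\sqcap a$, and combine for (iii). Your extra remark that the simplifications $a\sqcap1=a$ and $0\sqcup a'=a'$ are legitimate in a $\lambda$-lattice because they follow from the definition of the induced order (the arguments being comparable) is a point the paper leaves implicit, but it is the same proof.
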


\begin{proof}
	Let $a\in A$.
	\begin{enumerate}[(i)]
		\item Because of $1\odot a\le1$ we have $1\le a\to1=a'\sqcup(a\sqcap1)=a'\sqcup a$ and hence $a\sqcup a'=1$.
		\item Because of $0\le a\to0$ we have $a'\sqcap a=(0\sqcup a')\sqcap a=0\odot a\le0$ and hence $a\sqcap a'=0$.
		\item This follows from (i) and (ii).
	\end{enumerate}
\end{proof}

Consider the following bounded poset $\mathbf P=(A,\le,{}',0,1)$ with involution $'$:

\vspace*{-5mm}

\begin{center}
	\setlength{\unitlength}{7mm}
	\begin{picture}(8,8)
		\put(4,1){\circle*{.3}}
		\put(1,3){\circle*{.3}}
		\put(3,3){\circle*{.3}}
		\put(5,3){\circle*{.3}}
		\put(7,3){\circle*{.3}}
		\put(1,5){\circle*{.3}}
		\put(3,5){\circle*{.3}}
		\put(5,5){\circle*{.3}}
		\put(7,5){\circle*{.3}}
		\put(4,7){\circle*{.3}}
		\put(4,1){\line(-3,2)3}
		\put(4,1){\line(-1,2)1}
		\put(4,1){\line(1,2)1}
		\put(4,1){\line(3,2)3}
		\put(1,3){\line(0,1)2}
		\put(1,3){\line(1,1)2}
		\put(1,3){\line(2,1)4}
		\put(3,3){\line(-1,1)2}
		\put(3,3){\line(2,1)4}
		\put(5,3){\line(-2,1)4}
		\put(5,3){\line(1,1)2}
		\put(7,3){\line(-2,1)4}
		\put(7,3){\line(-1,1)2}
		\put(7,3){\line(0,1)2}
		\put(4,7){\line(-3,-2)3}
		\put(4,7){\line(-1,-2)1}
		\put(4,7){\line(1,-2)1}
		\put(4,7){\line(3,-2)3}
		\put(3.85,.3){$0$}
		\put(.35,2.85){$a$}
		\put(2.35,2.85){$b$}
		\put(5.4,2.85){$c$}
		\put(7.4,2.85){$d$}
		\put(.35,4.85){$d'$}
		\put(2.35,4.85){$c'$}
		\put(5.4,4.85){$b'$}
		\put(7.4,4.85){$a'$}
		\put(3.85,7.4){$1=0'$}
		\put(3.2,-.75){{\rm Fig.~4}}
		\put(0,-1.75){{\rm A bounded poset with involution}}
	\end{picture}
\end{center}

\vspace*{10mm}

This poset $\mathbf P$ can be converted into a $\lambda$-lattice in several ways. The converse of Lemma~\ref{lem2} (iii) does not hold, see the following $\lambda$-lattice. If $(A,\sqcup,\sqcap)$ is a $\lambda$-lattice with involution corresponding to $\mathbf P$ then the Sasaki operations $\odot$ and $\to$ on $A$ defined by (S2) do not form an adjoint pair, independent from the fact how $\sqcup$ and $\sqcap$ are defined within this $\lambda$-lattice. Suppose $\odot$ and $\to$ form an adjoint pair. Then we have
\begin{align*}
& b\le a\sqcup b=a\sqcup(a'\sqcap b)=a'\to b\text{ and hence }b\odot a'\le b, \\
& b\le a\sqcup c=a\sqcup(a'\sqcap c)=a'\to c\text{ and hence }b\odot a'\le c, \\
& b\not\le a=a\sqcup0=a\sqcup(a'\sqcap0)=a'\to0\text{ and hence }b\odot a'\not\le0,\text{ i.e. }b\odot a'\ne0
\end{align*}
which is a contradiction.

But there is another essential difference from the case of lattices. It is known (see e.g.\ \cite{CL11}) that the $\lambda$-lattice operations $\sqcup$ and $\sqcap$ need not be compatible with the induced order. For example we have $a\le c$ in the $\lambda$-lattice depicted in Fig.~3, but $a\sqcup b=d\not\le c=c\sqcup b$. Moreover, a $\lambda$-lattice is a lattice if and only if $\sqcup$ and $\sqcap$ are compatible with the induced order, see e.g.\ \cite{Sn} or Theorem~2.14 in \cite{CL11}. We consider a weaker version of compatibility expressed by the following conditions (E1) and (E2). These conditions are not trivial, they do not imply that the $\lambda$-lattice in question is a lattice.

For $\lambda$-lattices $(A,\sqcup,\sqcap,{}')$ with a unary operation we define the following conditions (which could be rewritten in the form of identities):
\begin{enumerate}
\item[(E1)] $x\le y$ implies $z'\sqcup(z\sqcap x)\le z'\sqcup(z\sqcap y)$,
\item[(E2)] $x\le y$ implies $(x\sqcup z')\sqcap z\le(y\sqcup z')\sqcap z$.
\end{enumerate}
Moreover, we consider also a weaker version of these conditions, namely
\begin{enumerate}
\item[(F1)] $x\odot y\le z$ implies $y'\sqcup\big(y\sqcap(x\odot y)\big)\le y'\sqcup(y\sqcap z)$,
\item[(F2)] $x\le y\to z$ implies $(x\sqcup y')\sqcap y\le\big((y\to z)\sqcup y'\big)\sqcap y$
\end{enumerate}
for all $x,y,z\in A$. Obviously, condition (E1) implies condition (F1) and condition (E2) implies condition (F2).

\begin{proposition}\label{prop1}
Let $\mathbf A=(A,\sqcup,\sqcap,{}')$ be a $\lambda$-lattice with a unary operation $'$ and $\odot$ and $\to$ denote the Sasaki operations on $A$ defined by {\rm(S2)}. Then the following holds:
\begin{enumerate}[{\rm(i)}]
\item If $\mathbf A$ satisfies conditions {\rm(D1)} and {\rm(F1)} then $\odot$ and $\to$ satisfy condition {\rm(A1)},
\item if $\mathbf A$ satisfies conditions {\rm(D2)} and {\rm(F2)} then $\odot$ and $\to$ satisfy condition {\rm(A2)}.
\end{enumerate}
\end{proposition}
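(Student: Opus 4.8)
The plan is to mirror the proof of Theorem~\ref{th4}, keeping careful track of the places where, in the lattice setting, we silently used that $\vee$ and $\wedge$ are monotone with respect to the order. In a $\lambda$-lattice the operations $\sqcup$ and $\sqcap$ need not be compatible with the induced order, so exactly those monotonicity steps must be replaced by the hypotheses (F1), respectively (F2), while the absorption-type identities (B1), (B2) get replaced by the inequalities (D1), respectively (D2). Before starting I would record the elementary $\lambda$-lattice facts used throughout: the absorption laws $x\sqcup(x\sqcap y)\approx x$ and $x\sqcap(x\sqcup y)\approx x$ together with the definition of the induced order give $x\sqcap y\le x$, $x\sqcap y\le y$ and $x\le x\sqcup y$, whence $t\le y$ implies $y\sqcap t=t$ and $t\le y$ implies $y\sqcup t=y$. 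In particular $a\odot b=(a\sqcup b')\sqcap b\le b$, so $b\sqcap(a\odot b)=a\odot b$, and $b'\le b'\sqcup(b\sqcap c)$.

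For (i), assuming (D1) and (F1), I would suppose $a\odot b\le c$ and assemble the chain
\begin{align*}
a & \le a\sqcup b' \le b'\sqcup\big((a\sqcup b')\sqcap b\big) = b'\sqcup(a\odot b) \\
  & = b'\sqcup\big(b\sqcap(a\odot b)\big) \le b'\sqcup(b\sqcap c) = b\to c.
\end{align*}
Here the first inequality is a general $\lambda$-lattice fact, the second is (D1) with $x=a$, $y=b$, the middle equality uses $b\sqcap(a\odot b)=a\odot b$, and the last inequality is the instance of (F1) with $(x,y,z)=(a,b,c)$, which applies because $a\odot b\le c$. This gives condition (A1).

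For (ii), assuming (D2) and (F2), I would suppose $a\le b\to c$ and run the dual chain
\begin{align*}
a\odot b & =(a\sqcup b')\sqcap b \le \big((b\to c)\sqcup b'\big)\sqcap b = \big((b'\sqcup(b\sqcap c))\sqcup b'\big)\sqcap b \\
         & =\big(b'\sqcup(b\sqcap c)\big)\sqcap b \le b\sqcap c \le c.
\end{align*}
Here the first inequality is the instance of (F2) with $(x,y,z)=(a,b,c)$, applicable since $a\le b\to c$; the next equality substitutes $b\to c=b'\sqcup(b\sqcap c)$; the following equality absorbs the repeated $b'$ using $b'\le b'\sqcup(b\sqcap c)$; and the last displayed inequality is (D2) with $x=b$, $y=c$. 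This gives condition (A2).

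The step I expect to be the main obstacle is purely one of bookkeeping discipline: because one cannot freely rearrange or monotonically push terms through $\sqcup$ and $\sqcap$ in a $\lambda$-lattice, I must verify that every simplification is either one of the four stated hypotheses or one of the recorded absorption and idempotency consequences. In particular I want to be sure that the reduction $b\sqcap(a\odot b)=a\odot b$ and the absorption of the repeated $b'$ genuinely follow from $a\odot b\le b$ and $b'\le b'\sqcup(b\sqcap c)$, and not from any unavailable compatibility of the operations with the induced order.
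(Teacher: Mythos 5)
Your proof is correct and follows essentially the same route as the paper's own argument: the same two chains of (in)equalities, with (D1) and (F1) justifying the two inequalities in part (i) and (F2) and (D2) those in part (ii), and the same absorption facts ($a\odot b\le b$, hence $b\sqcap(a\odot b)=a\odot b$, and $b'\le b'\sqcup(b\sqcap c)$, hence the repeated $b'$ collapses) filling the remaining equalities. The only difference is cosmetic bookkeeping in the order of rewriting $(a\sqcup b')\sqcap b$ as $a\odot b$ in part (i).
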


\begin{proof}
Let $a,b,c\in A$.
\begin{enumerate}[(i)]
\item If $\mathbf A$ satisfies conditions (D1) and (F1) and $a\odot b\le c$ then we obtain
\begin{align*}
a & \le a\sqcup b'\le b'\sqcup\big((a\sqcup b')\sqcap b\big)=b'\sqcup\Big(b\sqcap\big((a\sqcup b')\sqcap b\big)\Big)=b'\sqcup\big(b\sqcap(a\odot b)\big)\le \\
  & \le b'\sqcup(b\sqcap c)=b\to c.
\end{align*}
\item If $\mathbf A$ satisfies conditions (D2) and (F2) and $a\le b\to c$ then we obtain
\begin{align*}
a\odot b & =(a\sqcup b')\sqcap b\le\big((b\to c)\sqcup b'\big)\sqcap b=\Big(\big(b'\sqcup(b\sqcap c)\big)\sqcup b'\Big)\sqcap b= \\
         & =\big(b'\sqcup(b\sqcap c)\big)\sqcap b\le b\sqcap c\le c.
\end{align*}
\end{enumerate}
\end{proof}

In the next example we present a $\lambda$-lattice whose Sasaki operations defined by (S2) satisfy condition (A1), but not condition (A2).

\begin{example}
	Let $\mathbf A=(A,\sqcup,\sqcap,{}')$ denote the $\lambda$-lattice from Fig.~3 with the unary operation $'$ defined by
	\[
	\begin{array}{l|cccccc}
		x  & 0 & a & b & c & d & 1 \\
		\hline
		x' & 1 & 1 & 1 & d & c & 0
	\end{array}
	\]
	and $\odot$ and $\to$ denote the Sasaki operations on $A$ defined by {\rm(S2)}. If $y\ne c,d$ then condition {\rm(D1)} clearly holds. If $y=c$ then condition {\rm(D1)} reads $d\sqcup\big((x\sqcup d)\sqcap c\big)\approx x\sqcup d$ which holds since $x\sqcup d\ge d$. If, finally, $y=d$ then condition {\rm(D1)} reads $c\sqcup\big((x\sqcup c)\sqcap d\big)\approx x\sqcup c$ which holds since $x\sqcup c\ge c$. Now assume $x,y,z\in A$ and $x\le y$. If $z\ne c,d$ then clearly
	\begin{enumerate}
		\item[{\rm(3)}] $z'\sqcup(z\sqcap x)\le z'\sqcup(z\sqcap y)$.
	\end{enumerate}
	If $z=c$ then {\rm(3)} holds since
	\[
	z'\sqcup(z\sqcap x)=d\sqcup(c\sqcap x)=\left\{
	\begin{array}{ll}
		d & \text{if }x\le d, \\
		1 & \text{otherwise}.
	\end{array}
	\right.
	\]
	If, finally, $z=d$ then {\rm(3)} holds since
	\[
	z'\sqcup(z\sqcap x)=c\sqcup(d\sqcap x)=\left\{
	\begin{array}{ll}
		c & \text{if }x\le c, \\
		1 & \text{otherwise}.
	\end{array}
	\right.
	\]
	Hence $\mathbf A$ satisfies also condition {\rm(E1)} and by Proposition~\ref{prop1} {\rm(i)} $\odot$ and $\to$ satisfy condition {\rm(A1)}. But $\odot$ and $\to$ do not satisfy condition {\rm(A2)} since
	\[
	0\le d=d\sqcup0=c'\sqcup(c\sqcap0)=c\to0,
	\]
	but
	\[
	0\odot c=(0\sqcup c')\sqcap c=d\sqcap c=b\not\le0.
	\]
	However, the unary operation $'$ on $A$ cannot be defined in such a way that both condition {\rm(D2)} and condition {\rm(E2)} are satisfied. This can be seen as follows: Suppose there exists some unary operation $'$ on $A$ satisfying both condition {\rm(D2)} and condition {\rm(E2)}. Putting $x=c$ and $y=0$ in condition {\rm(D2)} yields
	\[
	c'\sqcap c=(c'\sqcup0)\sqcap c=\big(c'\sqcup(c\sqcap0)\big)\sqcap c\le c\sqcap0=0
	\]
	whence $c'=0$. Because of $a\le d$ we have according to condition {\rm(E2)}
	\[
	a=(a\sqcup c')\sqcap c\le(d\sqcup c')\sqcap c=d\sqcap c=b,
	\]
	a contradiction.
\end{example}

We now present an example of a $\lambda$-lattice whose Sasaki operations defined by (S2) satisfy condition (A2), but not condition (A1).

\begin{example}\label{ex1}
	Consider the following bounded poset $\mathbf P=(A,\le,{}',0,1)$ with involution $'$:
	
\vspace*{-5mm}

\begin{center}
\setlength{\unitlength}{7mm}
\begin{picture}(8,8)
\put(4,1){\circle*{.3}}
\put(1,3){\circle*{.3}}
\put(3,3){\circle*{.3}}
\put(5,3){\circle*{.3}}
\put(7,3){\circle*{.3}}
\put(1,5){\circle*{.3}}
\put(3,5){\circle*{.3}}
\put(5,5){\circle*{.3}}
\put(7,5){\circle*{.3}}
\put(4,7){\circle*{.3}}
\put(4,1){\line(-3,2)3}
\put(4,1){\line(-1,2)1}
\put(4,1){\line(1,2)1}
\put(4,1){\line(3,2)3}
\put(1,3){\line(0,1)2}
\put(1,3){\line(1,1)2}
\put(1,3){\line(2,1)4}
\put(3,3){\line(-1,1)2}
\put(3,3){\line(0,1)2}
\put(3,3){\line(2,1)4}
\put(5,3){\line(-2,1)4}
\put(5,3){\line(0,1)2}
\put(5,3){\line(1,1)2}
\put(7,3){\line(-2,1)4}
\put(7,3){\line(-1,1)2}
\put(7,3){\line(0,1)2}
\put(4,7){\line(-3,-2)3}
\put(4,7){\line(-1,-2)1}
\put(4,7){\line(1,-2)1}
\put(4,7){\line(3,-2)3}
\put(3.85,.3){$0$}
\put(.35,2.85){$a$}
\put(2.35,2.85){$b$}
\put(5.4,2.85){$c$}
\put(7.4,2.85){$d$}
\put(0.35,4.85){$d'$}
\put(2.35,4.85){$c'$}
\put(5.4,4.85){$b'$}
\put(7.4,4.85){$a'$}
\put(3.85,7.4){$1=0'$}
\put(3.2,-.75){{\rm Fig.~5}}
\put(0,-1.75){{\rm A bounded poset with involution}}
\end{picture}
\end{center}

\vspace*{10mm}
	
Define a bounded $\lambda$-lattice $\mathbf A=(A,\sqcup,\sqcap,{}',0,1)$ with involution corresponding to $\mathbf P$ in the following way: Put $B:=\{a,b,c,d\}$ and $B':=\{a',b',c',d'\}$ and for different $x,y\in B$ assume $x\sqcup y\in B'\setminus\{x',y'\}$ and put $x'\sqcap y':=0$. Let $\odot$ and $\to$ denote the Sasaki operations on $A$ defined by {\rm(S2)}. Evidently, $'$ is a complementation as well as an antitone involution. We show that for all $x,y,z\in A$ we have
	\begin{enumerate}
		\item[{\rm(A2)}] $x\le y\to z=y'\sqcup(y\sqcap z)$ implies $x\odot y=(x\sqcup y')\sqcap y\le z$.
	\end{enumerate}
	First observe that $\mathbf A$ satisfies the identities $x\sqcup x'\approx1$ and $x\sqcap x'\approx0$. If $x=0$ or $y\in\{0,1\}$ then condition {\rm(A2)} holds. If $x=1$ and $x\le y'\sqcup(y\sqcap z)$ then $y'\sqcup(y\sqcap z)=1$ and hence $y\sqcap z=y$, i.e.\ $y\le z$ showing $(x\sqcup y')\sqcap y=y\le z$. Now let $e,f$ be different elements of $B$. \\
	If $(x,y)=(e,f)$ then $(x\sqcup y')\sqcap y=(e\sqcup f')\sqcap f=0\le z$, \\
	if $(x,y)=(e,e')$ then $(x\sqcup y')\sqcap y=(e\sqcup e)\sqcap e'=0\le z$, \\
	if $(x,y)=(e,f')$ then $(x\sqcup y')\sqcap y=(e\sqcup f)\sqcap f'=0\le z$, \\
	if $(x,y)=(e',e)$ then $(x\sqcup y')\sqcap y=(e'\sqcup e')\sqcap e=0\le z$, \\
	if $(x,y)=(e',f')$ then $(x\sqcup y')\sqcap y=(e'\sqcup f)\sqcap f'=0\le z$, \\
	if $(x,y)=(e,e)$ and $x\le y'\sqcup(y\sqcap z)$ then $e\le e'\sqcup(e\sqcap z)$ and hence $e\sqcap z=e$, i.e.\ $e\le z$ showing $(x\sqcup y')\sqcap y=(e\sqcup e')\sqcap e=e\le z$, \\
	if $(x,y)=(e',f)$ and $x\le y'\sqcup(y\sqcap z)$ then $e'\le f'\sqcup(f\sqcap z)$ and hence $f\sqcap z=f$, i.e.\ $f\le z$ showing $(x\sqcup y')\sqcap y=(e'\sqcup f')\sqcap f=f\le z$, \\
	if, finally, $(x,y)=(e',e')$ and $x\le y'\sqcup(y\sqcap z)$ then $e'\le e\sqcup(e'\sqcap z)$ and hence $e'\sqcap z=e'$, i.e.\ $e'\le z$ showing $(x\sqcup y')\sqcap y=(e'\sqcup e)\sqcap e'=e'\le z$. This shows that $\odot$ and $\to$ satisfy condition {\rm(A2)}. However, $\odot$ and $\to$ do not satisfy condition {\rm(A1)} since
	\[
	a\odot c'=(a\sqcup c)\sqcap c'\in\{b'\sqcap c',d'\sqcap c'\}=\{0\}
	\]
	and hence $a\odot c'\le0$, but
	\[
	a\not\le c=c\sqcup0=c\sqcup(c'\sqcap0)=c'\to0.
	\]
	Moreover, $\mathbf A$ does not satisfy identity {\rm(C2)} since by putting $(x,y)=(a',b)$ we obtain
	\[
	\big(x'\sqcup(x\sqcap y)\big)\sqcap x=\big(a\sqcup(a'\sqcap b)\big)\sqcap a'=(a\sqcup b)\sqcap a'=0\ne b=a'\sqcap b=x\sqcap y.
	\]
\end{example}

\begin{remark}
	As shown in Example~\ref{ex1}, identity {\rm(C2)} is not a necessary condition for Sasaki operations in a $\lambda$-lattice to satisfy condition {\rm(A2)}.
\end{remark}

We are going to derive a characterization of $\lambda$-lattices satisfying conditions (D1) and (D2) in which the Sasaki operations defined by (S2) form an adjoint pair.

\begin{theorem}\label{th3}
Let $\mathbf A=(A,\sqcup,\sqcap,{}')$ be a $\lambda$-lattice with a unary operation $'$ satisfying conditions {\rm(D1)} and {\rm(D2)} and $\odot$ and $\to$ denote the Sasaki operations on $A$ defined by {\rm(S2)}. Then the following are equivalent:
\begin{enumerate}[{\rm(i)}]
\item The operations $\odot$ and $\to$ form an adjoint pair,
\item the $\lambda$-lattice $\mathbf A$ satisfies conditions {\rm(E1)} and {\rm(E2)},
\item the $\lambda$-lattice $\mathbf A$ satisfies conditions {\rm(F1)} and {\rm(F2)}.
\end{enumerate}
\end{theorem}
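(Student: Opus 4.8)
The plan is to prove the three conditions equivalent by establishing the cycle (i)~$\Rightarrow$~(ii)~$\Rightarrow$~(iii)~$\Rightarrow$~(i). Two of these three implications are essentially free, since the ingredients have already been recorded in the excerpt; the real content lies in the single implication (i)~$\Rightarrow$~(ii).

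For (i)~$\Rightarrow$~(ii), I would first unfold the definitions (S2) and observe that (E1) and (E2) are nothing but the monotonicity of $\to$ in its second argument and of $\odot$ in its first argument. Indeed, since $z\to x=z'\sqcup(z\sqcap x)$, condition (E1) asserts exactly that $x\le y$ implies $z\to x\le z\to y$; and since $x\odot z=(x\sqcup z')\sqcap z$, condition (E2) asserts exactly that $x\le y$ implies $x\odot z\le y\odot z$. Assuming (i), the operations $\odot$ and $\to$ form an adjoint pair, so Lemma~\ref{lem1} applies with $f=\odot$ and $g=\to$: whenever $a\le b$ it yields $a\odot c\le b\odot c$ and $c\to a\le c\to b$. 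Renaming variables, these are precisely (E2) and (E1). This is where the argument actually happens, and the one point to get right is the bookkeeping: matching the abstract monotonicity conclusions of Lemma~\ref{lem1} against the explicit Sasaki expressions appearing in (E1) and (E2), keeping straight that adjointness gives monotonicity of $f$ in the \emph{first} slot and of $g$ in the \emph{second}.

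The remaining two implications are immediate from facts already in hand. For (ii)~$\Rightarrow$~(iii) I would invoke the remark stated just after the definitions of (F1) and (F2), namely that (E1) implies (F1) and (E2) implies (F2); hence (E1) and (E2) together give (F1) and (F2). For (iii)~$\Rightarrow$~(i) I would apply Proposition~\ref{prop1}: since (D1) and (D2) hold by the standing hypothesis of the theorem, part~(i) of that proposition combined with (F1) delivers condition (A1), while part~(ii) combined with (F2) delivers condition (A2); having both (A1) and (A2) means $\odot$ and $\to$ form an adjoint pair, which is (i).

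I expect no serious obstacle, as the theorem is engineered so that each arrow of the cycle is supplied by a previous result. The only genuine care needed is in (i)~$\Rightarrow$~(ii), where one must notice that the two monotonicity statements of Lemma~\ref{lem1} coincide term-for-term with (E2) and (E1) after substituting (S2). It is also worth noting explicitly that the standing hypotheses (D1) and (D2) are consumed only in the final implication (iii)~$\Rightarrow$~(i) through Proposition~\ref{prop1}, and play no role in (i)~$\Rightarrow$~(ii) or (ii)~$\Rightarrow$~(iii).
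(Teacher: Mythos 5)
Your proposal is correct and follows essentially the same route as the paper: the cycle (i)~$\Rightarrow$~(ii) via Lemma~\ref{lem1} (reading (E1) and (E2) as monotonicity of $\to$ in the second and $\odot$ in the first argument), the trivial step (ii)~$\Rightarrow$~(iii), and (iii)~$\Rightarrow$~(i) via Proposition~\ref{prop1} using the standing hypotheses (D1) and (D2). Your added remark about where (D1) and (D2) are actually used is accurate and a nice clarification, but the argument itself coincides with the paper's.
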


\begin{proof}
Let $a,b,c\in A$. \\
(i) $\Rightarrow$ (ii): \\
If $a\le b$ then according to Lemma~\ref{lem1} we have
\begin{align*}
 c'\sqcup(c\sqcap a) & =c\to a\le c\to b=c'\sqcup(c\sqcap b), \\
(a\sqcup c')\sqcap c & =a\odot c\le b\odot c=(b\sqcup c)\sqcap c.
\end{align*}
(ii) $\Rightarrow$ (iii): \\
This is clear. \\
(iii) $\Rightarrow$ (i): \\
This follows from Proposition~\ref{prop1}.
\end{proof}

A $\lambda$-lattice as described in Theorem~\ref{th3} which is not a lattice is presented in Example~\ref{ex2} below.

It is worth noticing that we do not know an example of a $\lambda$-lattice (with a unary operation $'$) not being a lattice, but satisfying identities (C1) and (C2) whose Sasaki operations $\odot$ and $\to$ defined by (S2) form an adjoint pair. This indicates that the identities (C1) and (C2) are too strong for $\lambda$-lattices despite the fact that their lattice versions work well for lattices, see the previous section. This can be explained by the next theorem showing that a $\lambda$-lattice satisfying identities (C1) and (C2) where $\odot$ and $\to$ form an adjoint pair is very close to a lattice.

\begin{theorem}\label{th5}
Let $\mathbf A=(A,\sqcup,\sqcap,{}')$ be a $\lambda$-lattice with a surjective unary operation $'$ satisfying identities {\rm(C1)} and {\rm(C2)} and assume that the Sasaki operations on $A$ defined by {\rm(S2)} form an adjoint pair. Then $\mathbf A$ is a lattice.
\end{theorem}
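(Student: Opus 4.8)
The plan is to show that both $\lambda$-lattice operations $\sqcup$ and $\sqcap$ are compatible with the induced order; by the characterization recalled above (see \cite{Sn} or Theorem~2.14 in \cite{CL11}) this is exactly equivalent to $\mathbf A$ being a lattice. The starting point is that, since $\odot$ and $\to$ form an adjoint pair, Lemma~\ref{lem1} yields at once that $\odot$ is monotone in its first variable and $\to$ in its second, i.e.\ $\mathbf A$ satisfies conditions (E2) and (E1). Together with the identities (C1) and (C2) these will suffice to upgrade monotonicity of the Sasaki operations to monotonicity of $\sqcup$ and $\sqcap$ themselves. Throughout I would freely use the elementary $\lambda$-lattice facts $a\le a\sqcup b$ and $a\sqcap b\le b$ (both immediate from the absorption laws via the induced order), together with the remark that $c\le d$ forces $c\sqcup d=d$ and $c\sqcap d=c$.

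First I would prove that $\sqcap$ is monotone. Fix $z$ and assume $a\le b$. By (E1) we get $z\to a\le z\to b$, and then (E2) gives $(z\to a)\odot z\le(z\to b)\odot z$. The key computation is that $(z\to a)\odot z=z\sqcap a$: since $z'\le z\to a$ (because $z\to a=z'\sqcup(z\sqcap a)$), we have $(z\to a)\sqcup z'=z\to a$, whence $(z\to a)\odot z=\big((z\to a)\sqcup z'\big)\sqcap z=(z\to a)\sqcap z=z\sqcap a$ by (C2), and likewise with $b$. Hence $z\sqcap a\le z\sqcap b$, so $\sqcap$ is compatible with the induced order.

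The harder half is monotonicity of $\sqcup$, and this is where surjectivity of $'$ is essential. Given $a\le b$ and an arbitrary $z\in A$, I would use surjectivity to write $z=w'$ for some $w\in A$. From (E2) we obtain $a\odot w\le b\odot w$, and then (E1) gives $w\to(a\odot w)\le w\to(b\odot w)$. The crucial identity is $w\to(a\odot w)=a\sqcup w'=z\sqcup a$: since $a\odot w=(a\sqcup w')\sqcap w\le w$ we have $w\sqcap(a\odot w)=a\odot w$, so $w\to(a\odot w)=w'\sqcup(a\odot w)=w'\sqcup\big((a\sqcup w')\sqcap w\big)=a\sqcup w'$ by (C1), and the same computation applies with $b$. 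Therefore $z\sqcup a\le z\sqcup b$, proving that $\sqcup$ is compatible with the induced order as well.

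Having shown that both $\sqcup$ and $\sqcap$ are compatible with the induced order, I would conclude by the quoted characterization that $\mathbf A$ is a lattice. The main obstacle is precisely the $\sqcup$-step: compatibility of $\sqcap$ does not by itself force $\sqcup$ to behave correctly, and the device that makes the argument work is to represent an arbitrary element $z$ as $w'$ using surjectivity of $'$, and then to recognize, via (C1) together with $a\odot w\le w$, that the Sasaki term $w\to(a\odot w)$ collapses to the join $z\sqcup a$. I expect the only delicate points to be the careful bookkeeping with the $\lambda$-lattice absorption laws (associativity of $\sqcup$ and $\sqcap$ being unavailable) when simplifying expressions such as $(z\to a)\sqcup z'$ and $w\sqcap(a\odot w)$.
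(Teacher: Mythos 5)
Your proposal is correct and follows essentially the same route as the paper's own proof: both halves rest on the identities $(z\to a)\odot z=z\sqcap a$ (via (C2) and absorption) and $w\to(a\odot w)=a\sqcup w'$ (via (C1), surjectivity supplying $z=w'$, and absorption), combined with the monotonicity from Lemma~\ref{lem1}, followed by the characterization of lattices among $\lambda$-lattices from Theorem~2.14 of \cite{CL11}. The only cosmetic difference is that you phrase the monotonicity step through conditions (E1) and (E2), while the paper invokes Lemma~\ref{lem1} directly inside the computation chains.
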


\begin{proof}
Let $a,b,c\in A$. Since $'$ is surjective there exists some $d\in A$ with $d'=b$. Using identities (C1) and (C2) as well as Lemma~\ref{lem1} we obtain
\begin{align*}
a\sqcup c & =a\sqcup d'=d'\sqcup\big((a\sqcup d')\sqcap d\big)=d'\sqcup\Big(d\sqcap\big((a\sqcup d')\sqcap d\big)\Big)= \\
          & =d\to(a\odot d)\le d\to(b\odot d)=d'\sqcup\Big(d\sqcap\big((b\sqcup d')\sqcap d\big)\Big)=d'\sqcup\big((b\sqcup d')\sqcap d\big)= \\
          & =b\sqcup d'=b\sqcup c, \\
c\sqcap a & =\big(c'\sqcup(c\sqcap a)\big)\sqcap c=\Big(\big(c'\sqcup(c\sqcap a)\big)\sqcup c'\Big)\sqcap c=(c\to a)\odot c\le(c\to b)\odot c= \\
          & =\Big(\big(c'\sqcup(c\sqcap ab\big)\sqcup c'\Big)\sqcap c=\big(c'\sqcup(c\sqcap b)\big)\sqcap c=c\sqcap b.
\end{align*}
This means that $\sqcup$ and $\sqcap$ are monotone and according to Theorem~2.14 in \cite{CL11}, $\mathbf A$ is a lattice.
\end{proof}

We now present a $\lambda$-lattice whose Sasaki operations satisfy identities (C1) and (C2), but neither condition (A1), nor condition (A2).

\begin{example}
Put $B:=\{a,b,c,d,e,f,g\}$ and $B':=\{x'\mid x\in B\}$, assume $B$, $B'$ and $\{0,1\}$ to be pairwise disjoint and put
\[
C:=\{\{a,b,c\},\{a,d,f\},\{a,e,g\},\{b,d,g\},\{b,e,f\},\{c,d,e\},\{c,f,g\}\}.
\]
Then to any two different elements $x$ and $y$ of $B$ there exists exactly one element $F(x,y)$ of $B$ satisfying $\{x,y,F(x,y)\}\in C$. {\rm(}$C$ is the set of all ``lines'' of the Fano plane with the set $B$ of points. We illustrate this construction by the following diagram:

\vspace*{-5mm}

\begin{center}
	\setlength{\unitlength}{7mm}
	\begin{picture}(6,6)
		\put(1,1){\circle*{.3}}
		\put(3,1){\circle*{.3}}
		\put(5,1){\circle*{.3}}
		\put(2,3){\circle*{.3}}
		\put(3,2.33){\circle*{.3}}
		\put(4,3){\circle*{.3}}
		\put(3,5){\circle*{.3}}
		\put(1,1){\line(1,0)4}
		\put(1,1){\line(3,2)3}
		\put(1,1){\line(1,2)2}
		\put(5,1){\line(-3,2)3}
		\put(5,1){\line(-1,2)2}
		\put(3,1){\line(0,1)4}
		\put(3,2.35){\oval(2,2.7)}
		\put(.85,.3){$a$}
		\put(2.85,.3){$b$}
		\put(4.85,.3){$c$}
		\put(3.4,2.18){$d$}
		\put(1.35,2.85){$e$}
		\put(4.4,2.85){$f$}
		\put(2.85,5.4){$g$}
		\put(2.2,-.75){{\rm Fig.~6}}
		\put(1,-1.75){{\rm The Fano plane}}
	\end{picture}
\end{center}

\vspace*{10mm}

Put $A:=B\cup B'\cup\{0,1\}$ and let $x,y\in A$. We define $x\le y$ if $x=0$ or $y=1$ or $x=y$ or if $x\in B$ and $y\in B'\setminus\{x'\}$. Then $(A,\le)$ is a poset. If $x$ and $y$ are different elements of $B$ then we define $x\sqcup y:=\big(F(x,y)\big)'$. In all the other cases we define $x\sqcup y:=\max(x,y)$ provided $x$ and $y$ are comparable with each other and $x\sqcup y:=1$ otherwise. If $x$ and $y$ are different elements of $B$ then we define $x'\sqcap y':=F(x,y)$. In all the other cases we define $x\sqcap y:=\min(x,y)$ provided $x$ and $y$ are comparable with each other and $x\sqcap y:=0$ otherwise. Then $(A,\sqcup,\sqcap)$ is a $\lambda$-lattice that is not a lattice. We define the unary operation $'$ on $A$ by the following table:
\[
\begin{array}{l|llllllllllllllll}
x  & 0 & a  & b  & c  & d  & e  & f  & g  & a' & b' & c' & d' & e' & f' & g' & 1 \\
\hline
x' & 1 & a' & b' & c' & d' & e' & f' & g' & a  & b  & c  & d  & e  & f  & g  & 0
\end{array}
\]
Put $\mathbf A:=(A,\sqcup,\sqcap,{}')$. It is easy to see that $'$ is an antitone involution and a complementation and that $\mathbf A$ satisfies the identities $(x\sqcup y)'\approx x'\sqcap y'$ and $(x\sqcap y)'\approx x'\sqcup y'$. Let $h$ and $i$ be different elements of $B$ and put $j:=F(h,i)$. We prove that $\mathbf A$ satisfies the identity
\begin{enumerate}
\item[{\rm(1)}] $(x\sqcup y)\sqcap y'\approx x\sqcap y'$.
\end{enumerate}
If $x,y\in\{0,1\}$ or $y\in\{x,x'\}$ then {\rm(1)} holds.  \\
If $(x,y)=(h,i)$ then $(x\sqcup y)\sqcap y'=(h\sqcup i)\sqcap i'=j'\sqcap i'=h=h\sqcap i'=x\sqcap y'$. \\
If $(x,y)=(h,i')$ then $(x\sqcup y)\sqcap y'=(h\sqcup i')\sqcap i=i'\sqcap i=0=h\sqcap i=x\sqcap y'$. \\
If $(x,y)=(h',i)$ then $(x\sqcup y)\sqcap y'=(h'\sqcup i)\sqcap i'=h'\sqcap i'=j=h'\sqcap i'=x\sqcap y'$. \\
If $(x,y)=(h',i')$ then $(x\sqcup y)\sqcap y'=(h'\sqcup i')\sqcap i=1\sqcap i=i=h'\sqcap i=x\sqcap y'$. \\
By duality we obtain that $\mathbf A$ satisfies the identity
\begin{enumerate}
\item[{\rm(2)}] $(x\sqcap y)\sqcup y'\approx x\sqcup y'$.
\end{enumerate}
Now we have
\begin{align*}
y'\sqcup\big((x\sqcup y')\sqcap y\big) & \approx y'\sqcup(x\sqcap y)\approx x\sqcup y', \\
 \big(x'\sqcup(x\sqcap y)\big)\sqcap x & \approx(x'\sqcup y)\sqcap x\approx x\sqcap y
\end{align*}
showing that $\mathbf A$ satisfies the identities {\rm(C1)} and {\rm(C2)}. Now let $k\in B\setminus\{h,i,j\}$ and put $l:=F(i,k)$ and $m:=F(h,k)$. Then $m=i$ would imply $j=F(h,i)=F(h,m)=k$, a contradiction. Hence $m\ne i$ and $h\le i'$, but
\[
k\sqcup(k'\sqcap h)=k\sqcup h=m'\not\le i'=k\sqcup l=k\sqcup(k'\sqcap i')
\]
showing that $\mathbf A$ does not satisfy condition {\rm(E1)}. By duality, $\mathbf A$ does not satisfy condition {\rm(E2)}. Let $\odot$ and $\to$ denote the Sasaki operations on $A$ defined by {\rm(S2)}. According to Theorem~\ref{th3}, $\odot$ and $\to$ do not form an adjoint pair. Because of {\rm(1)} and {\rm(2)} we have
\begin{align*}
x\odot y & \approx(x\sqcup y')\sqcap y\approx x\sqcap y, \\
  x\to y & \approx x'\sqcup(x\sqcap y)\approx x'\sqcup y.
\end{align*}
Now $h'\odot i'=h'\sqcap i'=j\le k'$, but $h'\not\le k'=i\sqcup k'=i'\to k'$ showing directly that $\odot$ and $\to$ do not satisfy condition {\rm(A1)}. But $\odot$ and $\to$ do not satisfy condition {\rm(A2)}, too, since
\[
k\le j'=h\sqcup i=h\sqcup(h'\sqcap i)=h'\to i,
\]
but
\[
k\odot h'=(k\sqcup h)\sqcap h'=m'\sqcap h'=k\not\le i.
\]
\end{example}

Although the $\lambda$-lattices visualized in Figures~4 and 5 are complemented and the complementation is an antitone involution, the corresponding Sasaki operations defined by (S2) do not form an adjoint pair without regard how the $\sqcup$ and $\sqcap$ are defined. 

However, there exist $\lambda$-lattices with a unary operation not being lattices, but whose Sasaki operations defined by (S2) form an adjoint pair. At first, consider the following example.

\begin{example}\label{ex2}
	Consider the $\lambda$-lattice $\mathbf A=(A,\sqcup,\sqcap,{}')$
	
	\vspace*{-5mm}
	
	\begin{center}
		\setlength{\unitlength}{7mm}
		\begin{picture}(4,8)
			\put(2,1){\circle*{.3}}
			\put(1,3){\circle*{.3}}
			\put(3,3){\circle*{.3}}
			\put(1,5){\circle*{.3}}
			\put(3,5){\circle*{.3}}
			\put(2,7){\circle*{.3}}
			\put(1,3){\line(0,1)2}
			\put(1,3){\line(1,1)2}
			\put(1,3){\line(1,-2)1}
			\put(3,3){\line(-1,-2)1}
			\put(3,3){\line(-1,1)2}
			\put(3,3){\line(0,1)2}
			\put(2,7){\line(-1,-2)1}
			\put(2,7){\line(1,-2)1}
			\put(1.85,.3){$0=c\sqcap d$}
			\put(.35,2.85){$a$}
			\put(3.4,2.85){$b$}
			\put(.35,4.85){$c$}
			\put(3.4,4.85){$d$}
			\put(1.85,7.4){$1=a\sqcup b$}
			\put(1.2,-.75){{\rm Fig.~7}}
			\put(.6,-1.75){A $\lambda$-lattice}
		\end{picture}
	\end{center}
	
	\vspace*{10mm}
	
	with the unary operation $'$ defined by the following table:
	\[
	\begin{array}{l|llllll}
		x  & 0 & a & b & c & d & 1 \\
		\hline
		x' & 1 & b & a & d & c & 0
	\end{array}
	\]
	Then the operation tables of the Sasaki operations $\odot$ and $\to$ on $A$ defined by {\rm(S2)} look as follows:
	\[
	\begin{array}{c|cccccc}
		\odot & 0 & a & b & c & d & 1 \\
		\hline
		0   & 0 & 0 & 0 & 0 & 0 & 0 \\
		a   & 0 & a & 0 & 0 & 0 & a \\
		b   & 0 & 0 & b & 0 & 0 & b \\
		c   & 0 & a & b & c & 0 & c \\
		d   & 0 & a & b & 0 & d & d \\
		1   & 0 & a & b & c & d & 1
	\end{array}
	\quad\quad\quad
	\begin{array}{c|cccccc}
		\to & 0 & a & b & c & d & 1 \\
		\hline
		0  & 1 & 1 & 1 & 1 & 1 & 1 \\
		a  & b & 1 & b & 1 & 1 & 1 \\
		b  & a & a & 1 & 1 & 1 & 1 \\
		c  & d & d & d & 1 & d & 1 \\
		d  & c & c & c & c & 1 & 1 \\
		1  & 0 & a & b & c & d & 1
	\end{array}
	\]
	It can be easily verified that for all $x,y\in A$, $x\odot y$ is the smallest element $z$ of $A$ satisfying $x\le y\to z$. Hence $\odot$ and $\to$ form an adjoint pair. It is interesting and not hard to prove that $\mathbf A$ satisfies conditions {\rm(D1)} and {\rm(D2)}, but according to Theorem~\ref{th5} cannot satisfy both identities {\rm(C1)} and {\rm(C2)}. Indeed, $\mathbf A$ satisfies neither identity {\rm(C1)} nor identity {\rm(C2)} since
	\begin{align*}
	a'\sqcup \big((c\sqcup a')\sqcap a\big) & =b\sqcup\big((c\sqcup b)\sqcap a)=b\sqcup(c\sqcap a)=b\sqcup a=1\ne c=c\sqcup b=c\sqcup a', \\
	\big(c'\sqcup(c\sqcap a)\big)\sqcap c & =(d\sqcup a)\sqcap c=d\sqcap c=0\ne a=c\sqcap a.
	\end{align*}
\end{example}

There exist infinitely many of $\lambda$-lattices the Sasaki operations of which defined by (S2) form an adjoint pair. Namely, for every positive integer $n$ consider the direct power $\mathbf A^n$ of the $\lambda$-lattice $\mathbf A$ (with unary operation) from Example~\ref{ex2}. Since the operations on $\mathbf A^n$ are defined componentwise, every such $\mathbf A^n$ satisfies both conditions (A1) and (A2) and is not a lattice. Moreover, $\mathbf A$ is subdirectly irreducible. Consider the variety $\mathcal V(\mathbf A)$ of $\lambda$-lattices (with unary operation) generated by $\mathbf A$. According to Theorem~4.15 in \cite{CL11} this variety is congruence distributive. Hence the only finite subdirectly irreducible members of this variety are homomorphic images of subalgebras of $\mathbf A$. Because $\mathbf A$ is simple, all finite subdirectly irreducible algebras in $\mathcal V(\mathbf A)$ are subalgebras of $\mathbf A$. Up to $\mathbf A$ itself, these are only the subalgebras with universes $\{0,1\}$, $\{0,a,b,1\}$ and $\{0,c,d,1\}$ which are lattices, in fact Boolean algebras. Because every algebra in $\mathcal V(\mathbf A)$ is a $\lambda$-lattice (with unary operation) which is a subdirect product of subdirectly irreducible members, i.e.\ a subalgebra of a direct product of an arbitrary number of these four algebras, their Sasaki operations form an adjoint pair again.

Of course, the $\lambda$-lattices (with unary operation) mentioned before are not the only $\lambda$-lattices (with unary operations) the Sasaki operations of which form an adjoint pair, other such examples are e.g.\ direct products of $\mathbf A$ with orthomodular lattices (considered as $\lambda$-lattices with a unary operation).

\section{Ordered semirings and ring-like structures}

In this section we investigate so-called {\em ordered semirings with a unary operation}, i.e.\ ordered sixtuples $(S,+,\cdot,0,{}',\le)$ where $(S,+,\cdot,0)$ is a commutative semiring (see e.g.\ \cite G), $'$ a unary operation and $\le$ a partial order relation on $S$ satisfying the identity $xx'\approx0$. Recall from \cite G that a {\em commutative semiring} is an algebra $(S,+,\cdot,0)$ of type $(2,2,0)$ such that the following holds:
\begin{align*}
& (S,+,0)\text{ is a commutative monoid}, \\
& (S,\cdot)\text{ is a commutative semigroup}, \\
& x0\approx0, \\
& \text{the operation }\cdot\text{ is distributive with respect to }+.
\end{align*}
We can transform the Sasaki operations from (S1) by replacing $\vee$ and $\wedge$ by $+$ and $\cdot$, respectively. In this way we obtain the Sasaki operations on $S$ defined by
\begin{enumerate}
	\item[(S3)] $x\odot y:=(x+y')y\quad$ and $\quad x\to y:=x'+xy$
\end{enumerate}
for all $x,y\in S$. Observe that in our case
\[
x\odot y\approx(x+y')y\approx xy+y'y\approx xy+0\approx xy.
\]
We investigate when the Sasaki operations on $S$ defined by (S3) form an adjoint pair.

We introduce the following conditions:
\begin{enumerate}
	\item[(3)] $x\le y'+xyy$,
	\item[(4)] $x\le y$ implies $z'+zx\le z'+zy$,
	\item[(5)] $x\le y$ implies $xz\le yz$,
	\item[(6)] $xy\le x$.
\end{enumerate}

Using of these conditions, we can state and prove the following result.

\begin{theorem}\label{th1}
Let $\mathbf S=(S,+,\cdot,0,{}',\le)$ be an ordered semiring with a unary operation and $\odot$ and $\to$ denote the Sasaki operations on $S$ defined by {\rm(S3)}. Then the following holds:
\begin{enumerate}[{\rm(i)}]
\item If $\mathbf S$ satisfies conditions {\rm(3)} and {\rm(4)} then $\odot$ and $\to$ satisfy condition {\rm(A1)},
\item if $\mathbf S$ satisfies conditions {\rm(5)} and {\rm(6)} then $\odot$ and $\to$ satisfy condition {\rm(A2)},
\item if $\mathbf S$ satisfies conditions {\rm(3)} -- {\rm(6)} then $\odot$ and $\to$ form an adjoint pair.
\end{enumerate}
\end{theorem}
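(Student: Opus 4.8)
The plan is to prove (i) and (ii) independently and then obtain (iii) as their immediate conjunction, exactly as in Theorem~\ref{th4} and Proposition~\ref{prop1}. The key simplification throughout is the identity recorded just before the statement, namely $x\odot y\approx xy$, which follows from distributivity together with $y'y\approx0$; I would use it to replace $\odot$ by the semiring product at every occurrence.

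For part (i) I would assume conditions (3) and (4), fix $a,b,c\in S$ with $a\odot b\le c$ (that is, $ab\le c$), and chain the two hypotheses. Instantiating (3) at $x=a$, $y=b$ gives $a\le b'+abb$. Applying (4) to the inequality $ab\le c$ with $z=b$ yields $b'+b(ab)\le b'+bc$, and rewriting $b(ab)=abb$ by commutativity and associativity turns this into $b'+abb\le b'+bc=b\to c$. Combining the two gives $a\le b\to c$, which is (A1).

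For part (ii) I would assume conditions (5) and (6), fix $a,b,c\in S$ with $a\le b\to c=b'+bc$, and run the symmetric chain. Condition (5) with $z=b$ gives $ab\le(b'+bc)b$; the one computation requiring attention is the evaluation of the right-hand side, where distributivity produces $b'b+bcb$ and the defining identity $xx'\approx0$, used in the commutative form $b'b\approx0$, discards the first summand, leaving $b^2c$. Condition (6), read after a commutative rearrangement as $cb^2\le c$, then gives $ab\le b^2c\le c$, that is $a\odot b\le c$, which is (A2). Part (iii) is then immediate, since satisfying both (A1) and (A2) is by definition forming an adjoint pair.

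I expect no genuine obstacle here: the argument is a short two-step deduction in each direction. The only point demanding care is the simplification of $(b'+bc)b$ in part (ii), where one must apply distributivity before using $b'b\approx0$ to drop the unwanted term; the remaining work is routine bookkeeping of commutativity and associativity in the products $abb$ and $b^2c$, and conditions (3)--(6) are each invoked exactly once to justify the single monotonicity or absorption step they were designed for.
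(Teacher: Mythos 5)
Your proposal is correct and follows essentially the same route as the paper's own proof: both parts use the simplification $x\odot y\approx xy$, then chain condition (3) with condition (4) for (A1), and conditions (5) and (6) with distributivity and $b'b\approx 0$ for (A2), with (iii) as the immediate conjunction. The computations, including the rewriting $abb=b(ab)$ and the reduction of $(b'+bc)b$ to $c(bb)\le c$, match the paper's argument step for step.
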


\begin{proof}
Let $a,b,c\in S$.
\begin{enumerate}[(i)]
\item If $\mathbf S$ satisfies conditions (3) and (4) and $a\odot b\le c$ then we obtain
\[
a\le b'+abb=b'+b(ab)=b'+b(a\odot b)\le b'+bc=b\to c.
\]
\item If $\mathbf S$ satisfies conditions (5) and (6) and $a\le b\to c$ then we obtain
\[
a\odot b=ab\le(b\to c)b=(b'+bc)b=b'b+bcb=0+c(bb)=c(bb)\le c.
\]
\item This follows from (i) and (ii).
\end{enumerate}
\end{proof}

\begin{example}
	Consider a Boolean ring $\mathbf R=(R,+,\cdot,0,1)$ and define a unary operation $'$ and a binary relation $\le$ on $R$ by $x':=x+1$ and $x\le y$ whenever $xy=x$ for all $x,y\in R$, respectively. Then $\mathbf R:=(R,+,\cdot,0,{}',\le)$ is an ordered semiring with a unary operation satisfying conditions {\rm(3)} -- {\rm(6)}. Namely, let $a,b,c\in R$. Then we have:
	\begin{align*}
		& aa'=a\wedge a'=0, \\
		& a\le b'\vee a=(b\wedge a')'=(ba')'=b(a+1)+1=b+1+ab=b'+abb, \\
		& a\le b\text{ implies }c'+ca=c+1+ca=c(a+1)+1=(c\wedge a')'\le(c\wedge b')'=c(b+1)+1= \\
		& \hspace*{24mm}=c+1+cb=c'+cb, \\
		& a\le b\text{ implies }ac=a\wedge c\le b\wedge c=bc, \\
		& ab=a\wedge b\le a.
	\end{align*}
According to Theorem~\ref{th1} {\rm(iii)} the Sasaki operations on a Boolean ring $\mathbf R$ defined by {\rm(S3)} form an adjoint pair.
\end{example}

Finally, we are interested in algebras similar to semirings in which also Sasaki operations forming an adjoint pair can be defined.

In \cite C the first author introduced the following notion:

\begin{definition}
An {\em orthomodular pseudoring} is an algebra $(R,+,\cdot,0,1)$ of type $(2,2,0,0)$ such that $(A,+,0)$ is a commutative groupoid with neutral element $0$, $(A,\cdot,1)$ is a semilattice with neutral element $1$ and the following identities are satisfied:
\begin{align*}
                                 x+x & \approx0, \\
                                  x0 & \approx 0, \\
                             (x+1)+y & \approx x+(1+y), \\
                             (1+xy)x & \approx x+xyx, \\
                         (1+x)(1+xy) & \approx 1+x, \\
\big(1+x(1+y)\big)\big(1+y(1+x)\big) & \approx 1+(x+y), \\
                           (x+xy)+xy & \approx x.
\end{align*}
\end{definition}

Orthomodular pseudorings are closely related to orthomodular lattices in a similar way as Boolean rings are related to Boolean algebras.

\begin{theorem}\label{th2}
{\rm(}cf.\ {\rm\cite C)} If $(L,\vee,\wedge,{}',0,1)$ is an orthomodular lattice and
\begin{align*}
x+y & :=(x\wedge y')\vee(x'\wedge y), \\
 xy & :=x\wedge y
\end{align*}
for all $x,y\in L$ then $(L,+,\cdot,0,1)$ is an orthomodular pseudoring. If, conversely, $(R,+,\cdot,0,1)$ is an orthomodular pseudoring and
\begin{align*}
  x\vee y & :=1+(1+x)(1+y), \\
x\wedge y & :=xy, \\
       x' & :=1+x
\end{align*}
for all $x,y\in R$ then $(R,\vee,\wedge,{}',0,1)$ is an orthomodular lattice. This correspondence between orthomodular lattices and orthomodular pseudorings is one-to-one.
\end{theorem}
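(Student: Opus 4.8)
The statement bundles three claims: (I) starting from an orthomodular lattice, the operations $x+y:=(x\wedge y')\vee(x'\wedge y)$ and $xy:=x\wedge y$ turn $L$ into an orthomodular pseudoring; (II) starting from an orthomodular pseudoring, the operations $x\vee y:=1+(1+x)(1+y)$, $x\wedge y:=xy$ and $x':=1+x$ turn $R$ into an orthomodular lattice; (III) the two passages are mutually inverse. The plan is to treat the three parts separately, the whole argument being a translation between lattice language and ring-like language governed by the two dictionary entries $x'=1+x$ and ``$+$ is symmetric difference''. Two auxiliary computations serve as the backbone: in the lattice picture $1+z\approx z'$ (so in particular $1+xy\approx(x\wedge y)'\approx x'\vee y'$ and $1+x(1+y)\approx(x\wedge y')'\approx x'\vee y$), and for $a\le x$ one has $x+a\approx x\wedge a'$.

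For part (I), I would first record the easy structural facts: $(L,\cdot,1)=(L,\wedge,1)$ is a semilattice with neutral element $1$, and $+$ is commutative with $x+0\approx x$, so $(L,+,0)$ is a commutative groupoid with neutral element --- one does \emph{not} assert associativity of $+$, which is exactly why only the restricted law $(x+1)+y\approx x+(1+y)$ is demanded (both sides equal $(x\wedge y)\vee(x'\wedge y')$). Then I would verify the seven identities by direct computation. Through the dictionary above, the identity $x+x\approx0$ reduces to the complementation law $x\wedge x'\approx0$; the identities $(1+xy)x\approx x+xyx$, $(1+x)(1+xy)\approx1+x$ and $(1+x(1+y))(1+y(1+x))\approx1+(x+y)$ reduce respectively to the trivial fact $x\wedge y\le x$, to absorption, and to a De~Morgan law, none of them needing the orthomodular hypothesis. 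The only identity using (OM) is $(x+xy)+xy\approx x$: writing $a:=x\wedge y\le x$ one gets $x+a\approx x\wedge a'$ and then $(x\wedge a')+a\approx(x\wedge a')\vee a$, so the identity becomes $(x\wedge a')\vee a\approx x$ for $a\le x$, which is the standard form of (OM).

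For part (II), I would run this analysis in reverse. First I would show that $'$ is an involution: the instance $(1+1)+x\approx1+(1+x)$ of the restricted associativity, together with $1+1\approx0$ and $0+x\approx x$, gives $1+(1+x)\approx x$, i.e.\ $x''\approx x$. Since $x\vee y=(x'\wedge y')'$ and $\wedge=\cdot$ is a semilattice, $\vee$ is automatically the dual semilattice and the De~Morgan laws hold by construction; the two semilattices form a lattice because the identity $(1+x)(1+xy)\approx1+x$ reads, through the dictionary, as $x'\wedge(x'\vee y')\approx x'$, and since $'$ is a bijection this is equivalent to absorption $x\wedge(x\vee y)\approx x$ (the dual absorption then follows by applying $'$). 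Complementation $x\wedge x'\approx0$ comes from $(1+xy)x\approx x+xyx$ with $y:=x$, using idempotency and $x+x\approx0$, and then $x\vee x'\approx1$; antitonicity of $'$ follows from De~Morgan, while $0$ and $1$ are identified as bottom and top via $x0\approx0$ and $x1\approx x$. Finally, reading $(x+xy)+xy\approx x$ backwards delivers $(x\wedge a')\vee a\approx x$ for every $a\le x$, which is (OM).

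For part (III), I would verify that the two passages compose to the identity in both directions. Going $L\to R\to\widehat L$ is immediate from the dictionary: $1+x$ computed in $R$ is $x'$, the reconstructed meet is $\wedge$, and $1+(1+x)(1+y)$ unwinds to $(x'\wedge y')'=x\vee y$. Going $R\to L\to\widehat R$ is trivial for the multiplication, while for the addition one computes in the intermediate lattice $(x\wedge y')\vee(x'\wedge y)=1+\big(1+x(1+y)\big)\big(1+y(1+x)\big)$, which by the sixth identity equals $1+(1+(x+y))=x+y$ by the involution. I expect the main difficulty to be organizational rather than conceptual: one must carry the non-associativity of $+$ throughout, and the two decisive observations are that the sixth identity is exactly what recovers $+$ in the reconstruction, while the seventh identity is exactly the orthomodular law in disguise. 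Since the correspondence is essentially contained in \cite{C}, one may alternatively just cite it, but the translation above makes the equivalence transparent.
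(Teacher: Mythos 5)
Your proposal is correct, but be aware that the paper does not prove this theorem at all: it is stated with ``(cf.\ \cite{C})'' and imported wholesale from Chajda's 1996 paper on pseudosemirings induced by ortholattices. So there is no in-paper proof to match; what you have written is a self-contained verification that the paper itself omits. Your dictionary-based translation is sound, and it has the virtue of making visible exactly which pseudoring axiom encodes which lattice-theoretic fact: the third (restricted associativity) identity encodes the involution $x''\approx x$, the fourth gives $x+xy\approx x\wedge(xy)'$, the fifth is absorption, the sixth is the De~Morgan expression $x+y\approx(x\wedge y')\vee(x'\wedge y)$ that lets one recover $+$, and the seventh is (OM) in disguise --- none of which is apparent from the bare citation. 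One organizational remark: in your part (II), deriving (OM) by ``reading $(x+xy)+xy\approx x$ backwards'' tacitly requires first expressing $+$ in lattice terms, namely $x+y\approx(x\wedge y')\vee(x'\wedge y)$, which follows from the sixth identity together with the involution; you make this translation explicit only in part (III), so it should logically be established before the (OM) step (together with $x+xy\approx x\wedge(xy)'$ from the fourth identity, after which the computation $(x\wedge a')+a\approx(x\wedge a')\vee a$ for $a=xy$ goes through exactly as in your part (I)). This is a matter of ordering the ingredients you already have, not a gap, and with that rearrangement your argument is a complete and correct proof of the correspondence.
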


We can translate the Sasaki operations defined by (S1) for orthomodular lattices into the operations $+$ and $\cdot$ of the corresponding orthomodular pseudoring $(R,+,\cdot,0,1)$ as follows:
\begin{enumerate}
\item [(S4)] $x\odot y:=\big(1+(1+x)y\big)y\quad$ and $\quad x\to y:=1+x(1+xy)$
\end{enumerate}
for all $x,y\in R$.

\begin{example}
	Consider the following orthomodular pseudoring $(R,+,\cdot,{}',0,1)$ with $R=\{0,a,b,c,d,1\}$ and
	\[
	\begin{array}{c|cccccc}
		+ & 0 & a & b & c & d & 1 \\
		\hline
		0 & 0 & a & b & c & d & 1 \\
		a & a & 0 & 0 & 1 & 0 & c \\
		b & b & 0 & 0 & 0 & 1 & d \\
		c & c & 1 & 0 & 0 & 0 & a \\
		d & d & 0 & 1 & 0 & 0 & b \\
		1 & 1 & c & d & a & b & 0
	\end{array}
	\quad
	\begin{array}{c|cccccc}
		\cdot & 0 & a & b & c & d & 1 \\
		\hline
		0   & 0 & 0 & 0 & 0 & 0 & 0 \\
		a   & 0 & a & 0 & 0 & 0 & a \\
		b   & 0 & 0 & b & 0 & 0 & b \\
		c   & 0 & 0 & 0 & c & 0 & c \\
		d   & 0 & 0 & 0 & 0 & d & d \\
		1   & 0 & a & b & c & d & 1
	\end{array}
	\quad
	\begin{array}{l|l}
		x & x' \\
		\hline
		0 & 1 \\
		a & c \\
		b & d \\
		c & a \\
		d & b \\
		1 & 0
	\end{array}
	\]
	The Sasaki operations on $R$ defined by {\rm(S4)} read as follows:
	\[
	\begin{array}{c|cccccc}
		\odot & 0 & a & b & c & d & 1 \\
		\hline
		0   & 0 & 0 & 0 & 0 & 0 & 0 \\
		a   & 0 & a & b & 0 & d & a \\
		b   & 0 & a & b & c & 0 & b \\
		c   & 0 & 0 & b & c & d & c \\
		d   & 0 & a & 0 & c & d & d \\
		1   & 0 & a & b & c & d & 1
	\end{array}
	\quad
	\begin{array}{c|cccccc}
		\to & 0 & a & b & c & d & 1 \\
		\hline
		0  & 1 & 1 & 1 & 1 & 1 & 1 \\
		a  & c & 1 & c & c & c & 1 \\
		b  & d & d & 1 & d & d & 1 \\
		c  & a & a & a & 1 & a & 1 \\
		d  & b & b & b & b & 1 & 1 \\
		1  & 0 & a & b & c & d & 1
	\end{array}
	\]
\end{example}

Using Theorem~\ref{th2} and the result in \cite{CL17} on orthomodular lattices mentioned in the beginning we immediately obtain

\begin{theorem}
If $(R,+,\cdot,0,1)$ is an orthomodular pseudoring then the Sasaki operations on $R$ defined by {\rm(S4)} form an adjoint pair.
\end{theorem}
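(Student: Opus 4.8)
The plan is to reduce the statement to the already-established result about orthomodular lattices by transporting everything along the bijective correspondence of Theorem~\ref{th2}. Given an orthomodular pseudoring $(R,+,\cdot,0,1)$, that theorem produces an orthomodular lattice $(R,\vee,\wedge,{}',0,1)$ on the same underlying set via $x\vee y:=1+(1+x)(1+y)$, $x\wedge y:=xy$ and $x':=1+x$. On this lattice the authors' previous result in \cite{CL17} guarantees that the Sasaki operations defined by (S1), namely $(x\vee y')\wedge y$ and $x'\vee(x\wedge y)$, form an adjoint pair with respect to the induced lattice order. So it suffices to show that, under this correspondence, the pseudoring operations (S4) are literally the lattice operations (S1).

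First I would record the two arithmetic facts that drive the translation: from the identity $x+x\approx0$ we get $1+1\approx0$, and combining this with the restricted associativity $(x+1)+y\approx x+(1+y)$ yields $1+(1+y)\approx(1+1)+y\approx y$. In particular $1+y'\approx y$ and $1+x'\approx x$, which also confirms that $'$ is an involution. These are exactly the cancellations needed to collapse the nested expressions in (S4).

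Next I would carry out the substitution for each connective. For $\odot$ I would compute $x\vee y'=1+(1+x)(1+y')=1+(1+x)y$ using $1+y'\approx y$, and then apply $x\wedge y\approx xy$ to obtain $(x\vee y')\wedge y\approx\big(1+(1+x)y\big)y$, which is precisely the definition of $\odot$ in (S4). Dually, for $\to$ I would compute $1+x'\approx x$, so that $x'\vee(x\wedge y)=1+(1+x')(1+xy)=1+x(1+xy)$, again matching (S4). Thus the two pairs of operations agree symbol for symbol.

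Finally, since the order relevant to conditions (A1) and (A2) is the induced order of the orthomodular lattice $(R,\vee,\wedge,{}',0,1)$, and the operations (S4) on $R$ coincide with the Sasaki operations (S1) on that lattice, the adjointness furnished by \cite{CL17} transfers verbatim to $R$. I do not expect a genuine obstacle here: the whole argument is a transport of structure, and the only real content is the routine algebraic verification in the previous paragraph, which relies solely on $1+1\approx0$ and the restricted associativity already built into the definition of an orthomodular pseudoring.
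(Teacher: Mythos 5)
Your proposal is correct and follows exactly the paper's own route: the paper proves this theorem by citing Theorem~\ref{th2} together with the result of \cite{CL17}, i.e.\ precisely the transport of adjointness along the lattice--pseudoring correspondence. The only difference is that you explicitly verify (using $1+1\approx0$ and $(x+1)+y\approx x+(1+y)$) that (S4) is the translation of (S1), a routine check the paper leaves implicit when it introduces (S4).
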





{\bf Data availability statement} No datasets were generated or analyzed during the current study.



{\bf Affiliations}

{\bf Ivan Chajda}$^1$ $\cdot$ {\bf Helmut L\"anger}$^{1,2}$

Ivan Chajda \\
ivan.chajda@upol.cz

$^1$ Faculty of Science, Department of Algebra and Geometry, Palack\'y University Olomouc, 17.\ listopadu 12, 771 46 Olomouc, Czech Republic

$^2$ Faculty of Mathematics and Geoinformation, Institute of Discrete Mathematics and Geometry, TU Wien, Wiedner Hauptstra\ss e 8-10, 1040 Vienna, Austria

\begin{thebibliography}{99}
\bibitem{Be}
L.~Beran, Orthomodular Lattices. D.~Reidel, Dordrecht 1985. ISBN 90-277-1715-X.
\bibitem{Bi}
G.~Birkhoff, Lattice Theory. AMS, Providence, RI, 1979. ISBN 0-8218-1025-1.
\bibitem C
I.~Chajda, Pseudosemirings induced by ortholattices. Czechoslovak Math.\ J.\ {\bf46} (1996), 405--411.
\bibitem{CL11}
I.~Chajda and H.~L\"anger, Directoids. Heldermann, Lemgo 2011. ISBN 978-3-88538-232-4.
\bibitem{CL17}
I.~Chajda and H.~L\"anger, Orthomodular lattices can be converted into left residuated l-groupoids. Miskolc Math.\ Notes {\bf18} (2017), 685--689..
\bibitem{CL18}
I.~Chajda and H.~L\"anger, Weakly orthomodular and dually weakly orthomodular lattices. Order {\bf35} (2018), 541-555.
\bibitem{GGN}
J.~J.~M.~Gabri\"{e}ls, S.~M.~Gagola III and M.~Navara, Sasaki projections. Algebra Universalis {\bf77} (2017), 305--320.
\bibitem G
J.~S.~Golan, Semirings and their Applications. Kluwer, Dordrecht 1999. ISBN 0-7923-5786-8.
\bibitem{Sa}
U.~Sasaki, Orthocomplemented lattices satisfying the exchange axiom. J.\ Sci.\ Hiroshima Univ.\ Ser.\ A {\bf17} (1954), 293--302.
\bibitem{Sn}
V.~Sn\'a\v sel, $\lambda$-lattices. Math.\ Bohem.\ {\bf122} (1997), 267--272.
\end{thebibliography}
\end{document}